\newcommand{\E}{\mathsf{E}}
\newcommand{\Tr}{\mathsf{Tr}}
\newcommand{\F}{\mathbb{F}}
\newtheorem{proposition}{Proposition}
\newtheorem*{proposition*}{Proposition}
\newtheorem{lemma}{Lemma}
\newtheorem*{corollary*}{Corollary 5}
\newtheorem{definition}{Definition}
\newtheorem{theorem}{Theorem}
\newtheorem{claim}{Claim}
\title{Low-energy decomposition results over finite fields}
\author[A.\ Mohammadi]{Ali Mohammadi}
\address{A.M.: School of Mathematics, Institute for Research in Fundamental Sciences (IPM),
Tehran, Iran.}
\email{a.mohammadi@ipm.ir}
 \author[S.\ Stevens]{Sophie Stevens}
\address{S.S.: Johannn Radon Institute for Computational and Applied Mathematics (RICAM), Linz, Austria}
\email{sophie.stevens@oeaw.ac.at}
\begin{document}
\maketitle
\begin{abstract}
   We prove various low-energy decomposition results, showing that we can decompose a finite set $A\subset \mathbb{F}_p$ satisfying $|A|<p^{5/8}$, into $A = S\sqcup T$ so that, for a non-degenerate quadratic $f\in \F_p[x,y]$, we have
    \[ |\{(s_1,s_2,s_3,s_4)\in S^4 : s_1 + s_2 = s_3 + s_4\}| \ll  |A|^{3 - \frac15 + \epsilon}
    \]
    and
    \[
    |\{(t_1,t_2,t_3,t_4)\in T^4 : f(t_1, t_2) = f(t_3, t_4)\}|\ll |A|^{3 - \frac15 + \epsilon}\,.
    \] 

    Variations include extending this result to large $A$  and a low-energy decomposition involving additive energy of images of rational functions.  This gives an improvement to a result of Roche-Newton, Shparlinski and Winterhof \cite{RNShWin} 
    as well as a generalisation of a result of Rudnev, Shkredov and Stevens \cite{RudShSt}. 
    
    We consider applications to conditional expanders,  exponential sum estimates and the finite field Littlewood problem. In particular, we improve results of Mirzaei \cite{Mirzaei}, Swaenepoel and Winterhof \cite{SwaWin} and Garcia \cite{Garc}.
\end{abstract}
\section{Introduction}

In this paper, we show results of the following flavour: for a suitable non-degenerate polynomial $f\in \mathbb{F}_p[x,y]$, $A\subseteq \mathbb{F}_p$ admits a decomposition into disjoint sets $A = S\sqcup T$ so that 
\[
\E(S):=|\{(s_1,s_2,s_3,s_4)\in S^4 : s_1 + s_2 = s_3 + s_4\}| \ll |A|^{3 - \frac15 + \epsilon}
\]
and
\[
\E_f(T):=|\{(t_1,t_2,t_3,t_4)\in T^4 : f(t_1, t_2) = f(t_3, t_4)\}|\ll |A|^{3 - \frac15 + \epsilon}\,.
\] 

The actual formulation of our results is somewhat technical, and so we defer their formal presentation until Section~\ref{sec:main results}, after we have developed the necessary terminology and context of these results.% We give an overview of our techniques in Section~\ref{sec:techniques}.

\subsection{Background}

 The sum-product problem over finite fields is a quantitative interpretation of the observation that a set $A\subseteq \mathbb{F}_q$ (where $q$ is some prime power)  cannot be both additive and multiplicative, unless the intersection of $A$ with a multiplicative coset of a subfield of $\mathbb{F}_q$ is large. More precisely, the Erd\H{o}s-Szemer\'edi conjecture (over $\mathbb{F}_q$) asks if, for every $0<\epsilon <1$, the inequality
 \begin{equation}
 \label{eqn:SPConjecture}
     \max\{|A+A|, |AA|\}\gg_{\epsilon} |A|^{1+\epsilon}
 \end{equation}
holds under some natural conditions on sets $A\subset\F_q$. 

Here, we denote by $A+A$ the sum set of $A$: given $A, B\subseteq \F_q$, we define 
\[A+B = \{a+b:(a,b)\in A\times B\}\,.\]
We similarly define $A-B$, $AB$ and $A/B$, which we refer to as the difference, product and ratio sets of $A$ and $B$ respectively; we do not consider division by zero in the definition of $A/B$.

Note, for instance, that no $\epsilon>0$ exists in the case that $A=cG$ for some subfield $G$ of $\F_q$ and element $c\in \F_q$. 
The most studied instances of this problem involve either sets that are large in terms of the order of the field (e.g. $|A| > q^{1/2}$; see Garaev~\cite{Gar} for the state-of-the-art in this direction) or sets that are small in terms of the characteristic of the field (e.g. $|A|<p^{1/2}$; see the authors' companion paper \cite{MohSte} for the best results towards this problem). 
A work of Roche-Newton and Li \cite{RNLi} considers a less restrictive constraint, based on the size of the intersection of the set in question and multiplicative cosets of proper subfields of $\mathbb{F}_q$.

We describe $A$ as e.g. `additive' if its sum set is small, the canonical example being an arithmetic progression, where $|A+A|= 2|A|-1$. However this classification is fragile, for example, if $A$ is the union of an arithmetic progression of size $N$, and a random set of $N$ elements, then $|A+A|\sim |A|^2$, but it is clear that $A$ does possess additive structure (in the sense that $A$ contains a large arithmetic progression). 
A more robust characterisation of structure can be given via the energy of a set: for sets $A$ and $B$ we define the representation functions 
\begin{equation}
\label{eqn:1stsndmEn}
    r_{A\circ B}(\lambda) = |\{(a, b)\in A\times B: a\circ b = \lambda\}| \quad \text{for}\quad \circ\in \{+, -, \times, /\}\,.
\end{equation}
The additive and multiplicative energies of $A$ and $B$ are moments of the representation functions: for $k\geq 1$ we define
\[
\E_k(A,B):= \sum_{x\in A-B}r_{A-B}^k(x) \text{ and } \E_{k}^{\times}(A,B):= \sum_{x\in A/B}r_{A/B}^k(x)\,.
\]
We write $\E_k(A,A) = \E_k(A)$ and $\E(A,B) = \E_2(A,B)$; we do the same for multiplicative energy. . 
When $k\in \mathbb{N}$, the energy has a combinatorial interpretation as the number of solutions to (in the case of additive energy) the equation $a_1 - b_1 = \dots = a_k - b_k$ where $a_i\in A$ and $b_i\in B$. 
The case $k = 2$ is special because we can replace ``$+$" with ``$-$" in the above. 
Hence, through a simple application of the Cauchy-Schwarz inequality, we have 
 \begin{equation}
 \label{eqn:AddEnCS}
     \E(A, B) |A\pm B| \geq |A|^2|B|^2.
 \end{equation}
 These inequalities also hold for their multiplicative counterparts.
 
\subsection{Energy formulation of the sum product problem}
 Balog and Wooley~\cite{BW} raised the question of whether, in line with the sum-product problem, a similar duality statement exists for the energies of a given set, i.e. additive and multiplicative structure cannot coexist in a set, as measured by the energy. 
 In \cite[Theorem~1.3]{BW}, the authors prove that for $A\subset \F_p$, there exist disjoint subsets $B, C\subseteq A$ such that $A=B\cup C$ and
 \begin{equation}
 \label{eqn:LEDMEx}
     \max\{\E(B), \E^\times(C)\} \lesssim |A|^{3-\frac4{101} } + |A|^{3+\frac1{15}}p^{-\frac1{15}}\,.
 \end{equation}
This decomposition formulation is necessary: for example, if $A$ is the union of an arithmetic progression and a geometric progression of the same size, then both of its energies are essentially maximally large. Since $\max\{|B|, |C|\} \geq |A|/2$, the Cauchy-Schwarz estimate  \eqref{eqn:AddEnCS} (or its multiplicative counterpart) converts a statement of the form \eqref{eqn:LEDMEx} into a sum-product inequality.
A significant quantitative improvement to \eqref{eqn:LEDMEx} was obtained by Rudnev, Shkredov and Stevens \cite[Theorem~2.8]{RudShSt}, who proved that when $|A|\leq p^{5/8}$, one can take $|A|^{3-1/5}$ as the upper bound.

\subsection{Energy formulation of sums and function images problem}

 A variation of the sum-product problem, considered for example by Bukh and Tsimerman \cite{BukhTsim}, Solymosi~\cite{Sol08} and  Cilleruelo et al. \cite{CGOS}, is to establish a non-trivial lower bound for the quantity $\max\{|A+A|, |f(A)+f(A)|\}$ under some natural conditions on $A$ and some function $f$. The energy-variant of this problem was considered by Roche-Newton, Shparlinski and Winterhof \cite{RNShWin}, which we now describe. 
 
 Given a rational function $f\not\equiv 0$ in $\F_q(x)$, we write $f = g/h$, for coprime polynomials $g$ and $h\not\equiv 0$ in $\F_q[x]$ and define the degree of $f$ to be $d = \max\{\text{deg}(g), \text{deg}(h)\}$. 
 We say that a rational function $f$ in $\F_q(x)$ of degree $d$ is \emph{non-degenerate} if
 \begin{equation}
     \label{eqn:RNSHWINDC}
 f(x)\not\in \{(a(g(x)^p -g(x)) + b x + c: g(x)\in \F_q (x); a,b,c\in \F_q\}\,.
  \end{equation}

 Roche-Newton et al. \cite[Theorem~1.1]{RNShWin} show that for any set $A\subseteq\F_q$ and degree $d$ rational function  $f$ in $\F_q(x)$, there exists a decomposition of $A$ into disjoint subsets $S, T\subseteq A$ such that
\[
\max\{\E(S), \E (f(T))\}\ll_d \frac{|A|^3}{M_A},
\]
where 
\begin{equation}
\label{eqn:RNSHWIMExp}
M_A = \min\Bigg\{\frac{q^{1/2}}{|A|^{1/2}(\log|A|)^{11/4}}, \frac{|A|^{4/5}}{q^{2/5}(\log|A|)^{31/10}}\Bigg\}.
\end{equation}

This estimate is non-trivial when $|A|\gtrsim q^{1/2}$ ; no similar results are known for smaller sets. Macourt \cite{Mac} has studied  variations of \cite[Theorem~1.1]{RNShWin}. The non-degeneracy condition on $f$ ensures that it is not a linearised permutation polynomial (see \cite[Section~1.3]{RNShWin}).

\subsection{Function energy formulation}

Both the sum-product problem and its function-image variant discussed above are instances of the more general problem of the growth of $|A+A|+|f(A, A)|$ for some bivariate function $f$. See for example \cite{BukhTsim}, \cite{KoNassPhamVal}, \cite{Mirzaei} and \cite{Vu} for results in this direction. Note that, if $f = g(ax+by)$ for some polynomial $g$ and $A$ is an arithmetic progression, then $|A+A|\approx |f(A, A)| \approx |A|$. Hence, we require the following notion of non-degeneracy. 
\begin{definition}
\label{def:DegPoly}
We say a polynomial $f \in \F_q\left[x, y\right]$ is non-degenerate if it depends on each variable and is not of the form $g(ax+by)$, for some univariate polynomial $g$ with coefficients in $\F_q$.
\end{definition}

In this paper, we consider the energy formulation for this more general problem. 

Given a bivariate polynomial $f\in \F_q\left[x, y\right]$, define 
\begin{equation}
\label{eqn:bvEndef}
    \E_{f}(A, B):=|\{(a_1, a_2, b_1, b_2)\in A^2\times B^2: f(a_1, b_1) = f(a_2, b_2)\}|.
\end{equation}
For $\lambda\in \F_q$, let 
\begin{equation*}
r_{f(A, B)}(\lambda) = |\{(a, b)\in A\times B: f(a, b) = \lambda\}|.
\end{equation*}
We record the following identities and consequence of the Cauchy-Schwarz inequality:
\begin{equation*}
 \sum_{\lambda\in \F_q}r_{f(A, B)}(\lambda) = |A||B|\text{ ,}\quad     \sum_{\lambda\in \F_q}r_{f(A, B)}(\lambda)^2 = \E_{f}(A, B)\text{ , }\quad
%\end{equation*}
%By the Cauchy-Schwarz inequality, we have
%\begin{equation*}
    \E_{f}(A, B)|f(A, B)| \geq |A|^2|B|^2.
\end{equation*}
Roche-Newton et al. \cite[Question~5.1]{RNShWin} ask if one can prove the existence of a decomposition $A= S\cup T$, such that
\begin{equation}
\label{eqn:RSWQ}
    \max\{\E_{f}(S), \E_{g}(T)\}\ll |A|^{3-\delta},
\end{equation}
for some $\delta >0$, where $A\subset \F_q$ and $f, g \in \F_q\left[x, y\right]$ satisfy some natural non-degeneracy conditions. In this paper, incorporating some ideas from \cite{KoNassPhamVal}, we prove results of this nature.

\subsection{Applications of low-energy decomposition results}
Progress on low-energy decomposition results has led to direct improvements to the Erd\H{o}s-Szemer\'edi sum-product problem \eqref{eqn:SPConjecture}. Due to the increased level of attention that this problem attracts, we have recorded the improvements to the finite field sum-product problem due to the techniques here in a companion paper \cite{MohSte}.

The (bivariate) function variation of low-energy decomposition results similarly leads to progress on the
 bivariate function variation of the sum-product problem. Current progress on this problem is summarised by a result of Mirzaei~\cite{Mirzaei} who shows that $A\subseteq \mathbb{F}_p$ with $|A|<p^{1/2}$ satisfies
\[
\max\{|A\pm A|,|f(A,A)|\} \gtrsim |A|^{\frac65 + \frac4{305}}\,.
\]
Using low-energy decomposition statements, we improve the exponent in the case of difference sets. We expect these techniques to lead to further progress on related problems in the expanders literature, for example, the unconditional growth of the images of sets under specific polynomials. 

As observed by Balog and Wooley~\cite{BW} as well as in the later works \cite{RNShWin, SwaWin}, low-energy decomposition results provide useful tools in showing cancellation amongst various types of character sums. 

Let $q=p^n$ and define 
\[e_p(x) := \exp(2\pi i x/p)\quad \text{and} \quad\psi(x) = e_p(\Tr(x))\,,\] 
where $\Tr(x) = x+x^{p} + \cdots + x^{p^{n-1}}$ is the trace of $x\in \F_{q}$ over $\F_p$. 

Given sets $S, T\subseteq \F_q$, Vinogradov (see \cite[p. 92]{Vino}) showed that
\begin{equation}
\label{eqn:VinoB}
    \bigg|\sum_{s\in S}\sum_{t\in T}\psi(st)\bigg|\leq \sqrt{|S||T|q}.
\end{equation}
This bound is non-trivial if $|S||T|>q^{1/2}$. 
There are many results that improve \eqref{eqn:VinoB}, either in terms of strength of the bound or its effective range (see e.g. \cite{BouGar14, BouGlibKon, Heg}). Typically, this is achieved by considering sets $S$ and $T$ endowed with a particular structure. We focus on a recent application provided by Swaenepoel and Winterhof \cite{SwaWin}. In \cite[Theorem~1]{SwaWin}, the authors show the following: for a rational function $f\in \F_q(x)$ of degree $d$ satisfying \eqref{eqn:RNSHWINDC} 
so that  $f(T)\subseteq T$, there exists $U \subseteq T$ with $|U|\geq |T|/(d+1)$ such that
\begin{equation}
\label{eqn:SwaWinB}
    \bigg|\sum_{s\in S}\sum_{u\in U}\psi(su)\bigg|\ll \bigg(\frac{|S|^3|T|^3q}{M_T}\bigg)^{1/4},
\end{equation}
where $M_T$ is defined by \eqref{eqn:RNSHWIMExp}. See \cite[p.~3]{SwaWin} for a discussion of the strength of \eqref{eqn:SwaWinB}.

Following the work of Shkredov \cite{Shk}, we consider consequences of low-energy decomposition theorems to the finite field Littlewood problem of establishing non-trivial lower bounds on the $l_1$ norm of exponential sums over various sets.
See for instance \cite{Garc, Shk} for a background on this problem, and also for results showing that the $l_1$ norm of exponential sums, over images of intervals under various functions, is large. 

Finally, we mention that orbits of dynamical systems generated by functions $f\in \F_q(x)$ provide natural examples of sets $T\subset \F_q$, with $f(T)\subseteq T$. Namely, sets defined by 
\begin{equation}\label{eqn:OrbitsDef}
    \text{Orb}_f(u) = \{f^{(n)}(u):n\geq 0\},
\end{equation}
where $u\in \F_q$, $f^{(0)}(u) = u$ and $f^{(n)}(u) = f(f^{(n-1)}(u))$ for $n\geq 1$. Various arithmetical properties of such dynamical systems have been investigated, in particular, in \cite{Cha13, CGOS, GuiShp, Ost, RNShp}.

\subsection*{Notation}
For $p\neq 2$ prime and $q = p^n$ for $n\in \mathbb{N}$, $\mathbb{F}_q$ denotes the finite field of order $q$ and characteristic $p$. We write $\mathbb{F}_q^\times$ to denote the multiplicative group $\mathbb{F}_q\setminus\{0\}$.  

We write $\alpha\ll \beta$ or $\beta\gg \alpha$ if there exists an absolute constant $c>0$ such that $|\alpha|\leq c\beta$. If the constant $c$, depends on some parameter $\epsilon$, then we write, for example,  $\alpha\ll_{\epsilon} \beta$. If $\alpha\ll \beta$ and $\alpha\gg \beta$, we use $\alpha \approx \beta$. We also write $\alpha \lesssim \beta$ or $\beta \gtrsim \alpha$, if there exist $c_1, c_2>0$ such that $|\alpha| \leq c_1 (\log \beta)^{c_2}\beta$. If $\alpha\lesssim \beta$ and $\beta\lesssim \alpha$, we write $\alpha\sim \beta.$ We reserve the notation suppressing logarithmic factors for results pertaining to $\mathbb{F}_p$, where we do not expect our exponents to be optimal.  \\
For disjoint sets $S$ and $T$ we denote their union as $S\sqcup T$; a decomposition of $A$ into $S$ and $T$ exclusively refers to $A = S\sqcup T$ where $S$ and $T$ are disjoint.

\section{Main results}\label{sec:main results}
Our first result is for `small' sets $A$. 

\begin{theorem}\label{thm:bwdss} 
Let $A\subset \F_p$, with $|A|\leq p^{5/8}$and let $f\in \F_p\left[x, y\right]$ denote a non-degenerate quadratic polynomial. There exist disjoint subsets $S, T\subseteq A$ such that $A = S \sqcup T$ and
\begin{equation}
\label{eqn:BWDPolyD}
\max\{\E(S), \E_{f}(T)\}\lesssim |A|^{3-1/5}.
\end{equation}
\end{theorem}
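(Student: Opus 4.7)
The plan is to follow the template established by Rudnev, Shkredov and Stevens \cite{RudShSt} in the additive--multiplicative setting, substituting $\E_f$ for $\E^\times$. Their argument has two essentially independent ingredients: (i) a \emph{bilinear} energy inequality of the shape
\[
\E^\times(B)\,\E(B) \lesssim |B|^{6-2/5}
\]
valid for every $B\subseteq A$ in the regime $|B|\leq p^{5/8}$, proved via Rudnev's point--plane theorem and the third-moment Szemer\'edi--Trotter bound of Stevens--de Zeeuw; and (ii) a Balog--Wooley style peeling/pigeonhole step that converts such a bilinear inequality into a low-energy decomposition with exponent $3-1/5$. Ingredient (ii) should carry over to our setting essentially verbatim once the analogue of (i) is in place.

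The main work is therefore to prove the bivariate analogue: for any non-degenerate quadratic $f\in\F_p[x,y]$ and any $B\subseteq A$ with $|B|\leq p^{5/8}$,
\[
\E_f(B)\,\E(B) \lesssim |B|^{6-2/5}\,.
\]
I would expand $\E_f(B) = \sum_\lambda r_{f(B,B)}(\lambda)^2$, dyadically localise the output $\lambda$, and interpret each popular level set $\{(x,y):f(x,y)=\lambda\}$ as an algebraic curve of degree at most two in $\F_p^2$. The non-degeneracy condition of Definition~\ref{def:DegPoly} excludes the only genuinely obstructive case (where $f$ collapses to $g(ax+by)$ and the level sets become families of parallel lines); in all remaining cases the level conics meet in at most $O(1)$ points pairwise. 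Counting incidences between $B\times B$ and these conics via the Stevens--de Zeeuw $\F_p$ version of Szemer\'edi--Trotter (which furnishes the $p^{5/8}$ threshold), combined with the standard third-moment identity relating $\E(B)$ to popular differences, should yield the desired inequality.

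The peeling step is then routine. Starting from $A_0=A$, I repeatedly identify whichever of $\E(A_i)$, $\E_f(A_i)$ is too large and remove the element of $A_i$ responsible for the largest popular-sum, respectively popular $f$-value, contribution. The bilinear inequality guarantees that the removal drops the offending energy by a polynomial factor while the other grows only by a logarithmic factor. After $O(\log|A|)$ iterations one arrives at a decomposition satisfying \eqref{eqn:BWDPolyD}, with the $\lesssim$-notation absorbing the logarithmic overhead.

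The principal obstacle is ingredient (i). In the multiplicative case \cite{RudShSt} the underlying incidence object is a family of lines, for which the relevant ST-type bound is sharp; upgrading to conics forces one to control pencils of conics passing through a common point, and here the non-degeneracy hypothesis on $f$ must be used in a quantitatively robust way. A convenient workaround is to exploit that every non-degenerate quadratic can, after an affine change of variables, be brought to a near-normal form such as $xy+L(x,y)$ with $L$ affine, thereby reducing the counting problem to a controlled perturbation of the multiplicative setting already treated in \cite{RudShSt}. Careful tracking of constants through this reduction should preserve the exponent $1/5$ and the threshold $|A|\leq p^{5/8}$.
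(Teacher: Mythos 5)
There is a genuine gap, and it sits at the heart of your plan: the ``bilinear'' inequality you propose as ingredient (i), namely $\E_f(B)\,\E(B)\lesssim |B|^{6-2/5}$ for \emph{every} $B\subseteq A$, is false. Take $f(x,y)=xy$ (which depends on both variables and is not of the form $g(ax+by)$, hence non-degenerate in the sense of Definition~\ref{def:DegPoly}) and let $B$ be the union of an arithmetic and a geometric progression of equal length: then $\E(B)\gg|B|^3$ and $\E_f(B)=\E^\times(B)\gg|B|^3$, so the product is $\gg|B|^6$. This is exactly the obstruction that forces the decomposition formulation in the first place, and no single-set product inequality can drive the peeling. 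The correct ingredient --- both in \cite{RudShSt} and here --- is \emph{asymmetric}: if $\E(X)$ is large, one extracts (Lemma~\ref{lem:EnergyEnergyPigeonh}) a subset $X_*\subseteq X$ with $|X_*|^2\gtrsim \E(X)/|X|$ whose elements each have many representations $x=d-x'$ with $d$ a popular sum; substituting this representation into $f$ converts $\E_f(X_*,Y)$ into a count of solutions of $f(d_1-x_1,y_1)=f(d_2-x_2,y_2)$, a three-variable quadratic energy handled by the point--plane incidence bound of Lemma~\ref{lem:KoMiPhSh3Pol} (via Claim~\ref{claim:nondegbi23}), yielding Proposition~\ref{prop:bwd3}: $\E_f(X_*,Y)\lesssim |X_*|^4|Y|^{3/2}|A|^{3/2}\E(X)^{-3/2}$. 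The Balog--Wooley algorithm then iteratively removes these sets $X_*$ --- which are large, of size $\gtrsim|A|M^{-1/2}$, not single elements, and the iteration terminates because of their size rather than any logarithmic decay --- and the final bound on $\E_f(T)$ requires the two-set form $\E_f(B_i,B_j)$ together with the sub-additivity Lemma~\ref{lem:Efsubadd}.

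Two further points. First, your incidence scheme (points of $B\times B$ against the level conics $f=\lambda$) bounds $\E_f(B)$ on its own and cannot beat $|B|^3$ in general, nor does it couple $\E_f$ to $\E(B)$; the coupling must come from the additive structure of the extracted subset, and the relevant incidence input is Rudnev's point--plane theorem (through Lemma~\ref{lem:KoMiPhSh3Pol}), not a Szemer\'edi--Trotter bound for conics. Second, the proposed normal-form reduction of an arbitrary non-degenerate quadratic to $xy+L(x,y)$ fails: $f(x,y)=x^2+y$ is non-degenerate but its quadratic part has rank one, so no affine change of variables produces an $xy$ term. The paper avoids any such reduction by verifying the general three-variable non-degeneracy condition required by Lemma~\ref{lem:KoMiPhSh3Pol}.
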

We remark that this result automatically extends to sets and polynomials over arbitrary fields $\mathbb{F}$. In this setting, $A$ must satisfy the above size constraint in terms of $p$, the characteristic of $\mathbb{F}$; if the characteristic is zero, then there is no size constraint on $A$.

For large sets $A\subseteq \mathbb{F}_q$ our results are of a different flavour.
Instead of a decomposition of the set $A$, we obtain an energy-energy estimate for positive proportion subsets of $A$. Upon applications of the Cauchy-Schwarz inequality \eqref{eqn:AddEnCS}, this reproduces, up to logarithmic factors, Garaev's sum-product inequality  \cite[Theorem~1]{Gar}. 
\begin{theorem} 
\label{thm:LEDLL}
Let $A\subset \F_q$. There exist subsets $C\subseteq B\subseteq A$, with $|A|\ll |B| \ll (\log|A|)^{2} |C|$ such that
\[
\E(B)\E^{\times}(C) \ll \frac{|A|^7\log|A|}{q} + |A|^4(\log|A|)^2q\,.
%\frac{|A||B|^3|C|^{3}(\log|A|)}{q} + |A|^2|B||C|(\log|A|)^2q.
\]
\end{theorem}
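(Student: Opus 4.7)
The plan uses a two-stage dyadic refinement followed by a large-set incidence estimate; it mirrors Garaev's sum-product argument \cite{Gar}, but at the level of a product of two energies rather than a product set size.

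Stage one is dyadic pigeonholing together with popularity refinement. I would apply dyadic decomposition to the identities $\E(A)=\sum_{d} r_{A-A}(d)^2$ and $\E^\times(A)=\sum_{\lambda} r_{A/A}(\lambda)^2$ to locate popular levels: scales $\tau,\sigma>0$ and sets $P\subseteq A-A$, $Q\subseteq A/A$ with $r_{A-A}(d)\sim \tau$ on $P$, $r_{A/A}(\lambda)\sim \sigma$ on $Q$, and $\tau^2|P|\gtrsim \E(A)$, $\sigma^2|Q|\gtrsim \E^\times(A)$, each decomposition costing a $\log|A|$ factor. I would then build the nested subsets by popularity: $B\subseteq A$ consists of those $a\in A$ appearing as an endpoint in $\gtrsim \tau|P|/|A|$ representations of elements of $P$ as differences from $A$, and $C\subseteq B$ further restricts to those $b\in B$ appearing in $\gtrsim \sigma|Q|/|A|$ representations of elements of $Q$ as ratios. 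Standard double-counting yields $|B|\gg |A|$ and a further logarithmic loss in passing from $B$ to $C$, producing the size relation $|A|\ll|B|\ll (\log|A|)^2 |C|$ demanded by the theorem.

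Stage two is an incidence estimate over $\F_q$. Using the popularity of $B$ and $C$, I would express (up to the normalisations by $\tau,\sigma,|P|,|Q|$) the product $\E(B)\E^\times(C)$ as the number of solutions to an equation which couples additive and multiplicative structure, such as
\[
T:=\bigl|\{(b_1,b_2,b_3,b_4,c_1,c_2)\in B^4\times C^2 \, : \, c_1(b_1-b_2)=c_2(b_3-b_4)\}\bigr|.
\]
Interpreted geometrically, $T$ counts incidences in $\F_q^2$ between $\lesssim|A|^2$ points and $\lesssim|A|^2$ lines, so a Vinogradov/Weil-type large-set incidence bound of the shape $I(P,L)\ll |P||L|/q+(q|P||L|)^{1/2}$ applies. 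Unwinding the popularity normalisations, the first incidence term produces the $|A|^7\log|A|/q$ summand while the second produces the $|A|^4(\log|A|)^2 q$ summand of the theorem.

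The principal obstacle is in setting up the geometric count correctly: the coupled quantity must be a genuine upper bound for a suitable normalisation of $\E(B)\E^\times(C)$ (via the popularity of both $B$ and $C$), and not simply a lower bound in the wrong direction as Cauchy--Schwarz would yield. Threading both the $\tau,|P|$ and the $\sigma,|Q|$ scaling factors through a single equation so that the popularity constants cancel correctly against the incidence bound, and bookkeeping the logarithmic losses to match the exponents $\log|A|$ and $(\log|A|)^2$ appearing in the statement, is where the delicate work lies. A minor technical preprocessing step is to excise $0\in A$ (if present) so that the multiplicative manipulations are well defined; this contributes only a lower-order perturbation to the sizes of $B$ and $C$.
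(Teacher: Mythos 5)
The decisive step is missing: you never establish that your incidence count $T$ controls $\E(B)\E^{\times}(C)$ from above, and as written it cannot. Grouping the solutions of $c_1(b_1-b_2)=c_2(b_3-b_4)$ by the ratio $\lambda=c_1/c_2$ gives
\[
T=\sum_{\lambda} r_{C/C}(\lambda)\,\sigma(\lambda), \qquad \sigma(\lambda)=\sum_{x} r_{B-B}(x)\,r_{B-B}(\lambda x)\leq \E(B),
\]
so $T\leq |C|^2\E(B)$ is an \emph{upper} bound on $T$; to deduce $\E(B)\E^{\times}(C)\lesssim (\text{norm})\cdot T$ you would need $\sigma(\lambda)\gg \E(B)$ at precisely the popular ratios of $C$, which is false in general ($\sigma(\lambda)$ can be of order $|B|^2$ for every $\lambda\neq 1$ even when $\E(B)$ is large). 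This is exactly the Cauchy--Schwarz trap you name as the ``principal obstacle'', and naming it does not resolve it; moreover the normalisations $\tau,|P|,\sigma,|Q|$ from your Stage one do not appear in $T$ at all. The paper's route is structurally different: the two energies are never coupled in one equation. The regularisation lemma (Lemma~\ref{lem:regularization}, after Xue and Rudnev--Shkredov--Stevens) produces $C\subseteq B\subseteq A$ together with a level set $D\subseteq B-B$ of the representation function of $B$ itself, so that $|D|t^2\approx\E(B)$ up to a logarithm and \emph{every} $c\in C$ has $\gg |D|t/|B|$ representations $c=d+b$. One then bounds $\E^{\times}(C)$ alone: each occurrence of $c$ in a product $cc'$ is replaced by $d+b$ at a multiplicative cost of $|B|/(|D|t)$, turning $r_{CC}(x)$ into the number of solutions of $x=(d+b)c'$, a point--line incidence count to which Vinh's bound (Lemma~\ref{lem:VinhPLI}) applies; a dyadic summation (Lemma~\ref{lem:SUB4dyadicMBW}) then yields $\E^{\times}(C)\ll |A|^7\log|A|/(q\,\E(B))+q|A|^4(\log|A|)^2/\E(B)$, the factor $\E(B)$ in the denominator arising because the normalisation $(|D|t/|B|)^{-2}$ combines with the $|D||B|$ lines to give $1/(|D|t^2)$.

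Your Stage one also does not deliver the structure this argument needs. Selecting $B$ as the popular endpoints of $P\subseteq A-A$ only gives $\sum_{b\in B}r_{P+A}(b)\gtrsim \tau|P|$, and since the popular level may have $\tau|P|$ well below $|A|^2$ (for instance $\tau|P|\approx|A|^{7/4}$ when $\E(A)\approx|A|^{5/2}$), ``standard double-counting'' gives only $|B|\gtrsim \tau|P|/\min\{|P|,|A|\}$, which can be $o(|A|)$; likewise nothing in your construction forces $|B|\ll(\log|A|)^2|C|$. More importantly, what is required is not separate popularity of $B$ and $C$ with respect to $A-A$ and $A/A$, but the simultaneous statement that $D$ is a popular energy level of $B-B$ \emph{for the extracted subset $B$} while all of $C$ is popular in $D+B$; this is the nontrivial content of Lemma~\ref{lem:regularization} (proved by an iterative refinement, which is where the $(\log|A|)^2$ comes from) and is the only point at which $\E(B)$ enters the proof. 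I would replace Stage one by that lemma and Stage two by the substitution $c=d+b$ inside $\E^{\times}(C)$ rather than by the coupled equation $c_1(b_1-b_2)=c_2(b_3-b_4)$.
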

Theorem~\ref{thm:LEDLL} is non-trivial for $|A|\gtrsim q^{1/2}$ and as demonstrated by a construction in \cite{Gar}, it is optimal in the range $|A|\gtrsim q^{2/3}$. 

Finally, somewhat motivated by the low energy decomposition result of Roche-Newton et al. \cite[Theorem 1.1]{RNShWin}, we obtain the following energy-energy result:
\begin{theorem}
\label{thm:LEDRNSWI}
Let $A\subset \F_q$. There exist subsets $C\subseteq B\subseteq A$, with $|A|\ll |B| \ll (\log|A|)^{2} |C|$ such that
\[
\E(B)\E(f(C)) \ll \frac{|A|^7\log|A|}{q} + |A|^4(\log|A|)^2q
%\frac{|A||B|^3|C|^{3}(\log|A|)}{q} + |A|^2|B||C|(\log|A|)^2q.
\]
\end{theorem}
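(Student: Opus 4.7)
The plan is to imitate the proof of Theorem~\ref{thm:LEDLL}, with the role of the multiplicative structure $C\to C/C$ replaced by the polynomial structure $C\to f(C)-f(C)$. The three-step architecture will be (i) dyadic pigeonholing, concentrating the energies on popular-difference sets, (ii) an incidence encoding of the resulting configurations in $\F_q^2$, and (iii) an application of Vinh's point-line incidence theorem.

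\textbf{Step 1} (dyadic pigeonholing). A standard dyadic decomposition of the representation functions $r_{A-A}$ and $r_{f(A)-f(A)}$ should produce nested subsets $C\subseteq B\subseteq A$ with $|A|\ll|B|\ll(\log|A|)^{2}|C|$ and parameters $\tau,\sigma\geq 1$ such that
\[
\E(B)\lesssim \tau^{2}|P_\tau|,\qquad \E(f(C))\lesssim \sigma^{2}|Q_\sigma|,
\]
where $P_\tau=\{t:r_{B-B}(t)\sim \tau\}$ and $Q_\sigma=\{s:r_{f(C)-f(C)}(s)\sim \sigma\}$. Combined with the trivial bounds $\tau|P_\tau|\leq |B|^{2}$ and $\sigma|Q_\sigma|\leq |C|^{2}$ this gives
\[
\E(B)\,\E(f(C))\;\lesssim\;|A|^{4}\,\tau\sigma,
\]
reducing the theorem to showing $\tau\sigma\lesssim |A|^{3}/q+q$ (up to logarithms).

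\textbf{Step 2} (incidence setup). I would interpret $\tau\sigma$ as (essentially) the count of quadruples $(b_1,b_2,c_1,c_2)\in B^{2}\times C^{2}$ satisfying $b_1-b_2=f(c_1)-f(c_2)$, with both sides simultaneously popular in the appropriate difference sets. Rewriting this as $b_1+f(c_2)=b_2+f(c_1)$, I view it as an incidence problem between points in $B\times f(C)\subset \F_q^{2}$ and a line family parametrised by the popular-difference data. The non-degeneracy hypothesis~\eqref{eqn:RNSHWINDC} on $f$ enters here, ruling out the linearisations of $f$ which would collapse the incidence count into a trivial one.

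\textbf{Step 3} (applying Vinh). Vinh's incidence theorem
\[
I(\mathcal{P},\mathcal{L})\;\leq\;\frac{|\mathcal{P}||\mathcal{L}|}{q}+O\!\bigl(\sqrt{q\,|\mathcal{P}||\mathcal{L}|}\bigr),
\]
with $|\mathcal{P}|,|\mathcal{L}|\lesssim |A|^{2}$, yields $\tau\sigma\lesssim |A|^{3}/q+q$ after normalising, which combined with Step~1 gives the stated inequality.

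The main obstacle is the set-up in Step~2: unlike the multiplicative case of Theorem~\ref{thm:LEDLL}, where ratios $c/c'$ furnish a canonical line family in $\F_q^{2}$, the polynomial $f$ naturally produces algebraic curves, and one must either linearise by treating $f(C)$ as the basic additive set (thereby trading polynomial structure for control on the multiplicities with which a level of $f$ is hit, which is where the non-degeneracy hypothesis pays off) or invoke a curve-based incidence bound that still delivers the same $|A|^{3}/q+q$ saving.
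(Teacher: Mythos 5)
Your three-step outline has two genuine gaps, one logical and one technical. The logical gap is in Steps 1--2. Running two independent dyadic pigeonholings on $r_{A-A}$ and $r_{f(A)-f(A)}$ does not produce a linked pair $C\subseteq B\subseteq A$, and, more seriously, the product $\E(B)\E(f(C))$ cannot be bounded \emph{above} by a count of coincidences $b_1-b_2=f(c_1)-f(c_2)$ among popular differences: by Cauchy--Schwarz, $\sum_{\lambda}r_{B-B}(\lambda)\,r_{f(C)-f(C)}(\lambda)\leq \E(B)^{1/2}\E(f(C))^{1/2}$, so an upper bound on that mixed count gives no upper bound on the product of the two energies; moreover the popular levels $P_\tau$ and $Q_\sigma$ may be disjoint, in which case $\tau\sigma$ is not controlled by any such coincidence count at all. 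The paper's mechanism is different: the regularisation Lemma~\ref{lem:regularization} produces $C\subseteq B\subseteq A$ together with a single popular difference set $D\subseteq B-B$ and a level $t$ with $|D|t^2\approx\E(B)$ \emph{and} the covering property $r_{D+B}(c)\gg |D|t/|B|$ for every $c\in C$. It is this covering property --- every element of $C$ is hit $\gg \E(B)/(|B|t)$ times by $D+B$ --- that links the two energies: a large $\E(B)$ forces a large multiplicity $T$ in the subsequent energy bound for $f(C)$, which is what makes the product small.

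The technical gap is in Step 3. Vinh's theorem works for Theorem~\ref{thm:LEDLL} because, after substituting $a=q+r$, the equation $x=(q+r)b$ defines a \emph{line} in the $(b,x)$-plane for each pair $(q,r)$. After the analogous substitution here the relevant objects involve $f(q+r)$ and are curves, so Lemma~\ref{lem:VinhPLI} does not apply; you flag this obstacle yourself but leave it unresolved. The paper's resolution is to abandon incidence geometry at this step and to replace Lemma~\ref{lem:SUB4dyadicMBW} by Lemma~\ref{lem:RNShWin}, the Roche-Newton--Shparlinski--Winterhof estimate based on the Weil bound for complete sums of the shape $\sum_x\psi(uf(x)+vx)$; the non-degeneracy condition \eqref{eqn:RNSHWINDC} is exactly what guarantees square-root cancellation there. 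With Lemmas~\ref{lem:regularization} and~\ref{lem:RNShWin} in hand (taking $Q=D$, $R=B$ and $T\gg|D|t/|B|$), the proof reduces to the same two-line computation as that of Theorem~\ref{thm:LEDLL}.
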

Based on a construction in \cite[Section~1.3]{RNShWin}, Theorem~\ref{thm:LEDRNSWI} is sharp up to constants in the range $|A|> q^{2/3} (\log|A|)^{1/6}$, whereas \cite[Theorem 1.1]{RNShWin} is sharp in a range of the form $|A|\gg q^{9/13}(\log|A|)^{7/26}$.

%\subsection{Techniques}\label{sec:techniques}
%\tbd{For each result we get a mixed energy result. For main theorem, we can optimally use the BW \cite{BW} alg. to get the full decomp.
%}
%The mixed energy decomposition is particular to the theorem in question and typically is proved as follows. First, we apply a regularisation lemma of Xue \cite{Xue}, who also credits Rudnev. This regularisation lemma enables us to extract subsets $B$ and $C$, where $C$ is endowed with an additive structure relating to the additive energy of $B$. We then use results from incidence geometry (or Weil bounds) -- for instance Rudnev's incidence theorem between points and planes \cite{Rud18} -- to yield the auxiliary decomposition result.  

%In the case of \tbd{thm1}, to decompose $A$, we first find subsets $C\subseteq B\subseteq A$ and identify which of e.g. $\E(B),\E_f(C)$ is small. We then remove the subset pertaining to small energy from $A$ and repeat this argument. In this way, we decompose $A$. %We describe this algorithmic approach in Section~\ref{sec:algorithm}.

\subsection{Applications}
Our first application is a quantitative improvement to an expansion result of Mirzaei \cite{Mirzaei}.
\begin{theorem}\label{thm:MirImp}
Let $A\subset \F_p$, with $|A|\ll p^{23/52}$ and let $f\in \F_p[x, y]$ be a non-degenerate quadratic polynomial. Then
\[
\max\{|A-A|, |f(A, A)|\}\gtrsim |A|^{28/23}.
\]
\end{theorem}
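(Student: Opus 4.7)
The natural first step is to apply Theorem~\ref{thm:bwdss}, which is available since $|A|\leq p^{23/52}<p^{5/8}$. This produces a decomposition $A=S\sqcup T$ with
\[\max\{\E(S),\E_f(T)\}\lesssim |A|^{14/5}.\]
By pigeonhole at least one of $|S|,|T|$ is $\geq |A|/2$; without loss of generality $|S|\geq |A|/2$ (the case $|T|\geq |A|/2$ is symmetric and produces the analogous lower bound on $|f(A,A)|$). Cauchy--Schwarz then yields the baseline
\[|A-A|\geq |S-S|\geq \frac{|S|^4}{\E(S)}\gtrsim |A|^{6/5}.\]
This ``easy'' combination of the low-energy decomposition with Cauchy--Schwarz only gives the exponent $6/5$, short of the target $28/23$.

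The plan to upgrade from $6/5$ to $28/23$ is to supplement this with an auxiliary sum--function inequality of the form
\[|A-A|^{\alpha}\,|f(A,A)|^{\beta}\gtrsim |A|^{\gamma}\]
for suitable $\alpha,\beta,\gamma$. A natural route is incidence geometry: the level sets $\{(x,y):f(x,y)=\lambda\}$ of a non-degenerate quadratic $f$ form a family of conics, any two of which meet in $O(1)$ points, so a point/curve Szemer\'edi--Trotter-type estimate (proved via Rudnev's point--plane theorem, or via a Stevens--de Zeeuw point--line bound after a suitable parametrisation) yields such an inequality. The non-degeneracy of $f$ (Definition~\ref{def:DegPoly}) is essential to guarantee that the incidence problem is genuinely two-dimensional and not degenerate. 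The restriction $|A|\leq p^{23/52}$ is then imposed by the range in which this incidence estimate is non-trivial.

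To conclude, suppose for contradiction that both $|A-A|$ and $|f(A,A)|$ are bounded above by $|A|^{28/23}$. Combining this with the baseline $|A-A|\gtrsim |A|^{6/5}$ from the decomposition and substituting into the sum--function inequality above, one balances the exponents $\alpha,\beta,\gamma$ (together with the input $\E(S)\lesssim |A|^{14/5}$, which feeds into the incidence argument in place of the trivial $\E(S)\leq |A|^3$) to derive a contradiction. The main technical obstacle I anticipate is calibrating the incidence estimate so that its exponents balance precisely to $28/23$, and checking that the resulting range of validity is exactly $|A|\leq p^{23/52}$; in particular, the non-degeneracy of the quadratic $f$ must be used in a quantitative way throughout the incidence step, not only to rule out linearised polynomials as in Definition~\ref{def:DegPoly}.
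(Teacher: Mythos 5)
There is a genuine gap: your write-up is a plan rather than a proof, and the plan's two concrete components do not match what is actually needed. First, your starting point --- the symmetric decomposition of Theorem~\ref{thm:bwdss} followed by Cauchy--Schwarz --- only ever yields the exponent $6/5$ for \emph{one} of the two quantities (whichever of $S$, $T$ is large), and you correctly note this falls short of $28/23$. The entire content of the theorem lies in the upgrade, and there you leave the ``auxiliary sum--function inequality'' $|A-A|^{\alpha}|f(A,A)|^{\beta}\gtrsim|A|^{\gamma}$ completely unspecified: no values of $\alpha,\beta,\gamma$, no concrete incidence statement, and no verification that the exponents can in fact be balanced to $28/23$. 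Saying ``one balances the exponents to derive a contradiction'' assumes precisely what has to be proved. Moreover, the route you gesture at (point/conic incidences for the level sets of $f$) is not the mechanism that produces $28/23$.

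The proof in the paper does not use the decomposition $A=S\sqcup T$ at all. Instead it uses the \emph{asymmetric} mixed-energy bound of Proposition~\ref{prop:bwdss2} with $V=A-A$, producing subsets $C\subseteq B\subseteq A$ with
\[
\E(B,A-A)\lesssim |A|^{1/3}\,|f(C,C)|^{2/3}\,|A-A|^{5/3},
\]
and then amplifies via \emph{fourth-moment} energies: Lemma~\ref{lem:RSSDC} (extracted from Rudnev--Shakan--Shkredov) gives $|A|^{24}\leq \E_4(A)^2|A-A|^{5}\Delta^4\E_4(A,D)$, and Mirzaei's bound $\E_4(A,B)\ll |f(A,A)|^2|B|^3/|A|$ (Lemma~\ref{lem:MirE4}) converts both fourth moments into powers of $|f(A,A)|$, yielding $|A|^{19}\lesssim |f(A,A)|^6|A-A|^7\,\E(B,A-A)$. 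Substituting the displayed bound gives $|A|^{56}\lesssim |f(A,A)|^{20}|A-A|^{26}$ and hence the exponent $56/46=28/23$. None of these ingredients --- the mixed energy against $A-A$, the $\E_4$ machinery, or the P\"unnecke--Ruzsa bookkeeping (Lemma~\ref{lem:PRI1}) needed to justify the $p$-constraints and explain the threshold $p^{23/52}$ --- appears in your proposal, so the argument as written cannot be completed along the lines you describe.
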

This improves the exponent of $6/5 + 4/305$ attained by Mirzaei to the exponent $6/5 + 2/115$. We note in particular that this result demonstrates the efficiencies that low-energy decomposition results yield. To see how low-energy decomposition results are used in the sum-product problem, we refer the reader to \cite{MohSte}.

As a second application, we give improvements of the main result of Swaenepoel and Winterhof \cite[Theorem~1]{SwaWin}:

\begin{theorem}
\label{thm:BCS1}
Let $S, T\subset \F_q$ and let $f\in \F_q(x)$ be a rational function of degree $d$ that satisfies \eqref{eqn:RNSHWINDC}. Suppose that $f(T)\subseteq T$. Then $T$ contains a large subset $U$ so that 
\[
|U|\gg \frac{|T|}{(d+1)(\log|T|)^2}
\] 
and
\begin{equation*}
    \bigg|\sum_{s\in S}\sum_{u\in U}\psi(su)\bigg|\ll \bigg(\frac{|S|^3|T|^3q}{M_T}\bigg)^{1/4},
\end{equation*}
where 
\begin{equation}\label{eqn:MALS}
M_T = \min\left\{\frac{q^{1/2}}{|T|^{1/2}(\log|T|)^{1/2}}, \frac{|T|}{q^{1/2}(\log|T|)}\right\}.
\end{equation}
\end{theorem}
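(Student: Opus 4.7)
The approach is to follow the framework of Swaenepoel and Winterhof \cite{SwaWin}, substituting the improved energy input of Theorem~\ref{thm:LEDRNSWI} in place of the low-energy estimate from \cite{RNShWin}. First, I would apply Theorem~\ref{thm:LEDRNSWI} with $A = T$ and the given rational function $f$, obtaining subsets $C \subseteq B \subseteq T$ with $|T| \ll |B|$, $|B| \ll (\log|T|)^{2}|C|$, and
\[
\E(B)\,\E(f(C)) \ll \frac{|T|^{7}\log|T|}{q} + |T|^{4}(\log|T|)^{2}q.
\]
A short computation shows that the square root of the right-hand side is comparable to $|T|^{3}/M_{T}$ with $M_{T}$ as in \eqref{eqn:MALS}, so by the inequality $\min(\E(B), \E(f(C))) \leq \sqrt{\E(B)\,\E(f(C))}$, at least one of the two factors is $\ll |T|^{3}/M_{T}$.

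Next, I would split into two cases. If $\E(B) \leq \E(f(C))$, then I would take $U := B \subseteq T$, which satisfies $|U| \gg |T|$ and $\E(U) \leq \E(B) \ll |T|^{3}/M_{T}$. If instead $\E(f(C)) < \E(B)$, then I would take $U := f(C)$; the hypothesis $f(T) \subseteq T$ gives $U \subseteq T$, and since $f$ has degree $d$ every fibre of $f$ meets $C$ in at most $d$ points, so $|U| = |f(C)| \geq |C|/d \gg |T|/((d+1)(\log|T|)^{2})$, while $\E(U) = \E(f(C)) \ll |T|^{3}/M_{T}$. In either case $U \subseteq T$ meets the size lower bound demanded by the theorem and has small additive energy.

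Finally, I would conclude via the standard fourth-moment bound for bilinear exponential sums: two applications of Cauchy--Schwarz followed by orthogonality of additive characters yield
\[
\bigg|\sum_{s \in S}\sum_{u \in U}\psi(su)\bigg|^{4} \leq |S|^{3}\sum_{s \in \F_{q}}\bigg|\sum_{u \in U}\psi(su)\bigg|^{4} = |S|^{3}\,q\,\E(U) \ll \frac{|S|^{3}|T|^{3}q}{M_{T}},
\]
and taking fourth roots produces the stated estimate. The whole argument is essentially a plug-and-play substitution of Theorem~\ref{thm:LEDRNSWI} into the SW framework; the only mildly delicate point is the case split, which is forced because Theorem~\ref{thm:LEDRNSWI} controls only the \emph{product} $\E(B)\,\E(f(C))$ rather than each factor individually as in \cite{RNShWin}. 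This case split is also the source of the additional $(\log|T|)^{2}$ factor in the lower bound on $|U|$ compared to the cleaner $|T|/(d+1)$ bound of \cite{SwaWin}.
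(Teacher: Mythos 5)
Your proposal is correct and follows essentially the same route as the paper: the paper packages your steps 1--3 as Lemma~\ref{lem:SwaWinEBL} (applying Theorem~\ref{thm:LEDRNSWI}, taking $U=B$ or $U=f(C)$ according to which energy factor is small) and then concludes exactly via the fourth-moment bound \eqref{eqn:BCSHLS}. Your observations about the case split being forced by the product-form energy bound and about the origin of the extra $(\log|T|)^2$ factor match the paper's reasoning.
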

This result is non-trivial in a range of the form $|S||T|^2 \gtrsim q^{3/2}$
%$|S|^{1/4}|T|^{1/2}> q^{3/8+o(1)}$ 
and its strength increases as $|T|$ becomes larger than $|S|$;  
when $|S|=|T|$, this result is weaker than \eqref{eqn:VinoB}. 
Theorem~\ref{thm:BCS1} is a strict improvement over \cite[Theorem~1]{SwaWin}, as can be seen through a comparison of the quantities $M_T$ appearing in the two theorems (given by \eqref{eqn:MALS} and \eqref{eqn:RNSHWIMExp}). 
Furthermore, Theorem~\ref{thm:BCS1} yields quantitative improvements to \cite[Theorems~5 and 9]{SwaWin}.

As a third application, we provide a variant of Theorem~\ref{thm:BCS1} concerning small subsets of $\F_p$ and quadratic polynomials.
\begin{theorem}
\label{thm:BCS2}
Let $f\in \F_p\left[x\right]$ be a quadratic polynomial and let $T\subseteq \F_p$. Suppose that $f(T)\subseteq T$ and $ |T|\leq p^{5/8}$. Then there exists a subset $U\subseteq T$, with $|U| \gg |T|$ such that for any set $S\subseteq \F_p$, we have
\begin{equation}
\label{eqn:BCSDSB}
    \bigg|\sum_{s\in S}\sum_{u\in U} e_p(su)\bigg| \lesssim (|S|^3|T|^{3-\frac15} p)^{1/4}.
\end{equation}
Moreover, if $|S|\leq p^{5/8}$ and $f(S)\subseteq S$, then there exists $V\subseteq S$, with $|V|\gg |S|$, such that
\begin{equation}
\label{eqn:BCSDSB2}
    \bigg|\sum_{u\in U}\sum_{v\in V} e_p(uv)\bigg| \lesssim p^{1/8}(|S||T|)^{17/20}.
\end{equation}
\end{theorem}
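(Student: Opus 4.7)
The plan is to apply Theorem~\ref{thm:bwdss} once to $T$ (and, for the second inequality, once more to $S$) to extract subsets $U\subseteq T$ and $V\subseteq S$ of positive density with small additive energy, and then to bound the two bilinear exponential sums via Cauchy--Schwarz and Parseval.

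I take the bivariate polynomial $F(x,y) := f(x) + f(y)$. Writing $f(x) = \alpha x^2 + \beta x + \gamma$ with $\alpha \neq 0$, the polynomial $F$ has no $xy$ term and depends on both variables, whereas any $g(ax+by)$ with $g$ quadratic and $a,b \neq 0$ inevitably produces a nonzero $xy$ term; hence $F$ is non-degenerate in the sense of Definition~\ref{def:DegPoly}. Applying Theorem~\ref{thm:bwdss} yields a decomposition $T = T_1 \sqcup T_2$ with $\E(T_1), \E_F(T_2) \lesssim |T|^{3-1/5}$. I pigeonhole on sizes: either $|T_1| \geq |T|/2$, in which case I set $U := T_1$, or $|T_2| \geq |T|/2$ and I take $U := f(T_2)$. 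In the latter case $U \subseteq T$ by $f(T) \subseteq T$, $|U| \geq |T_2|/2 \gg |T|$ since the quadratic $f$ is at most $2$-to-$1$, and $\E(U) \leq \E_F(T_2)$ upon expanding $\E_F(T_2) = \sum_{y_1+y_2=y_3+y_4} m(y_1)m(y_2)m(y_3)m(y_4)$ with $m(y) := |\{t\in T_2 : f(t)=y\}| \geq 1$ on $f(T_2)$. In either branch $U \subseteq T$ with $|U| \gg |T|$ and $\E(U) \lesssim |T|^{3-1/5}$. The inequality \eqref{eqn:BCSDSB} then follows from the standard bilinear estimate $|\sum_{s \in S}\sum_{u \in U} e_p(su)|^4 \leq |S|^3 p\,\E(U)$, obtained by Cauchy--Schwarz in $s$ and Parseval applied to the Fourier transform $\widehat{1_U}(\xi) := \sum_{u \in U} e_p(\xi u)$.

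For \eqref{eqn:BCSDSB2}, I repeat the construction with $S$ in place of $T$ to obtain $V \subseteq S$ with $|V| \gg |S|$ and $\E(V) \lesssim |S|^{3-1/5}$. Write $W'$ for the sum in \eqref{eqn:BCSDSB2}. Cauchy--Schwarz gives $|W'|^2 \leq |V| \sum_{v \in V} |\widehat{1_U}(v)|^2$, and expanding $|\widehat{1_U}(v)|^2$ and interchanging summations rewrites the inner sum as $\sum_a r_{U-U}(a)\,\widehat{1_V}(a)$. The critical step is H\"older's inequality with exponents $4/3$ and $4$: the interpolation $\|r_{U-U}\|_{4/3} \leq \|r_{U-U}\|_1^{1/2}\|r_{U-U}\|_2^{1/2} = |U|\,\E(U)^{1/4}$, together with the Parseval identity $\|\widehat{1_V}\|_4^4 = p\,\E(V)$, yields $|W'|^2 \leq |U||V|\,\E(U)^{1/4}\E(V)^{1/4}p^{1/4}$. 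Substituting $|U| \leq |T|$, $|V| \leq |S|$ and the two energy bounds produces $|W'| \lesssim p^{1/8}(|S||T|)^{17/20}$, as required.

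The delicate piece is the H\"older--interpolation step in the second part: deploying the small additive energy of \emph{both} $U$ and $V$ symmetrically is what reduces the $p^{1/4}$ of the one-sided bilinear bound used for \eqref{eqn:BCSDSB} down to the $p^{1/8}$ in \eqref{eqn:BCSDSB2}. A secondary observation is that the $f$-invariance of $T$ and $S$ enters exactly to convert the $\E_F$-small half of each decomposition into a subset with genuine small additive energy, via the image $f(T_2)$ (respectively $f(S_2)$).
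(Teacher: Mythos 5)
Your proposal is correct and follows essentially the same route as the paper: the paper packages your construction of $U$ (apply Theorem~\ref{thm:bwdss} with $F(x,y)=f(x)+f(y)$, pigeonhole, and pass to $f(T_2)\subseteq T$ using $f(T)\subseteq T$) as Lemma~\ref{lem:SwaWinEB}, and your Cauchy--Schwarz/Parseval and H\"older--interpolation computations are exactly the stated bounds \eqref{eqn:BCSHLS} and \eqref{eqn:BCSHSS}, applied with $(X,Y)=(S,U)$ and $(U,V)$ respectively. The only difference is that you re-derive these auxiliary statements inline rather than citing them.
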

To allow for a rough comparison between the estimates of Theorem~\ref{thm:BCS2} and Vinogradov's estimate \eqref{eqn:VinoB}, suppose we have sets $S, T, V, U\subset \F_p$ as given by Theorem~\ref{thm:BCS2}, with $|S|=|T|=N$ and let $f\in \F_p[x]$ be quadratic. Then 
\[
\bigg|\sum_{u\in U}\sum_{v\in V} e_p(uv)\bigg| \leq 
\begin{cases} 
Np^{\frac12} &\mbox{for } p^{\frac12+\frac1{18}}<N\leq p~; \\
N^{\frac{29}{20}}p^{\frac14+o(1)} & \mbox{for } p^\frac12<N\leq p^{\frac12+\frac1{18}}~;\\
N^\frac{17}{10}p^{\frac18+o(1)} & \mbox{for } p^{\frac12-\frac1{22}}<N\leq p^\frac12~;\\
N^2 & \mbox{for }  N<p^{\frac12-\frac1{22}}.
\end{cases}
\]
That is, in this demonstrative setting, Theorem~\ref{thm:BCS2} is superior to both Vinogradov's estimate and the trivial bound in the range $p^{\frac12-\frac1{22}}<N\leq p^{\frac12+\frac1{18}}$.

Finally, we turn our attention to the finite field Littlewood problem and prove lower bounds on the $l_1$ norm of exponential sums over certain types of sets.
%Our result provides a new class of examples whose exponential sum has a large $l_1$ norm. 
%For comparison, Garcia \cite{Garc} and Shkredov \cite{Shk} demonstrate that set.}s of the form $T = f(I)$ lead to exponential sums with large $l_1$ norms,  where $I$ is an interval and $f$ a certain function.

\begin{theorem}\label{thm:LWP}
Let $f\in \F_p[x]$ denote a quadratic polynomial and let $A\subseteq \F_p$ be any set with $|A|\ll p^{2/3}$ and $|A+A|\ll |A|$, then 
\begin{equation}\label{eqn:WienerfI}
    \frac{1}{p}\sum_{\lambda \in\F_p}\left|\sum_{a\in f(A)}e_p(\lambda a)\right|\gtrsim |A|^{1/4}.
\end{equation} 
Let $T\subseteq \F_p$, with $|T|\ll p^{5/8}$ and suppose $f(T)\subseteq T$. Then
\begin{equation}\label{eqn:WienerOrbits}
\frac{1}{p}\sum_{\lambda \in\F_p}\left|\sum_{t\in T}e_p(\lambda t)\right|\gtrsim |T|^{1/10}.
\end{equation}
\end{theorem}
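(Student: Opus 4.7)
The plan is to combine the low-energy decomposition of Theorem~\ref{thm:bwdss} with the standard Fourier lower bound
\[
\frac{1}{p}\sum_{\lambda\in\F_p}\Bigl|\sum_{b\in B}e_p(\lambda b)\Bigr|\geq \frac{|B|^{3/2}}{\E(B)^{1/2}},
\]
valid for every $B\subseteq\F_p$. This follows by interpolating $\|g\|_2\leq\|g\|_1^{1/3}\|g\|_4^{2/3}$ with the Parseval identities $\sum_\lambda|\sum_{b\in B}e_p(\lambda b)|^2 = p|B|$ and $\sum_\lambda|\sum_{b\in B}e_p(\lambda b)|^4 = p\E(B)$.

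For \eqref{eqn:WienerfI}, the plan is to first establish the energy bound $\E(f(A))\lesssim|A|^{5/2+o(1)}$. Starting from $|A+A|\ll|A|$, the Pl\"unnecke--Ruzsa inequality gives $|A-A|\ll|A|$. For quadratic $f$, the factorisation $f(a_1)-f(a_2)=(a_1-a_2)L(a_1,a_2)$ with $L$ linear expresses $\E(f(A))$ as a bilinear multiplicative energy between $A-A$ and an affine image of $A+A$. Since $|A|\ll p^{2/3}$ and both of these sets have small additive doubling, a point--hyperbola incidence bound in the spirit of Rudnev--Shkredov--Stevens \cite{RudShSt} controls the multiplicative energy by $|A|^{5/2+o(1)}$, and substitution into the Fourier inequality yields \eqref{eqn:WienerfI}.

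For \eqref{eqn:WienerOrbits}, the plan is to apply Theorem~\ref{thm:bwdss} to $T$ with the bivariate quadratic $F(x,y):=f(x)+f(y)$, which is non-degenerate in the sense of Definition~\ref{def:DegPoly}. This produces $T=S\sqcup T'$ with $\E(S),\E_F(T')\lesssim|T|^{3-1/5}$. Since the quadratic $f$ is at most $2$-to-$1$, the quantity $\E_F(T')$ is comparable to the additive energy $\E(f(T'))$ of the image, and the hypothesis $f(T)\subseteq T$ forces $f(T')\subseteq T$. By pigeonhole, either $|S|\gtrsim|T|$ or $|f(T')|\gtrsim|T|$, so we obtain a subset $B\subseteq T$ with $|B|\sim|T|$ and $\E(B)\lesssim|T|^{14/5}$. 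Applying the Fourier inequality to $B$ then gives $\frac{1}{p}\sum_\lambda|\sum_{b\in B}e_p(\lambda b)|\gtrsim|T|^{3/2}/|T|^{7/5}=|T|^{1/10}$.

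The main obstacle is to transfer this last bound from $B$ to $T$, since the Wiener norm is not monotone under inclusion: one can easily exhibit $B\subsetneq T$ with $\sum_\lambda|\sum_{b\in B}e_p(\lambda b)|>\sum_\lambda|\sum_{t\in T}e_p(\lambda t)|$. The idea is to exploit that $B$ covers a positive proportion of $T$ together with the Parseval control $\sum_\lambda|\sum_{t\in T\setminus B}e_p(\lambda t)|^2=p|T\setminus B|$ on the complementary piece, possibly combined with a dyadic pigeonhole on the spectrum of $1_T$ or an iterative refinement of the decomposition in order to close the gap.
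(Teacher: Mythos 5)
Your proposal has a genuine gap in both halves, and in both cases the missing ingredient is the same: Lemma~\ref{lem:KonShkWi} (Konyagin--Shkredov), which for $X_1\subseteq X$ gives $\frac1q\sum_{y}|\sum_{x\in X}\psi(xy)|\geq |X_1|^2|X|^{-1/2}\E(X_1)^{-1/2}$. This is exactly the subset-to-set transfer you identify as ``the main obstacle'' at the end: it lower-bounds the $l_1$ norm of the exponential sum over the \emph{large} set using only the energy of a positive-proportion \emph{subset}, and its proof is a two-line H\"older argument ($p|X_1|=\sum_\lambda \widehat{1_X}(\lambda)\overline{\widehat{1_{X_1}}(\lambda)}\leq (\sum_\lambda|\widehat{1_X}|)^{1/2}(\sum_\lambda|\widehat{1_X}||\widehat{1_{X_1}}|^2)^{1/2}$, followed by Cauchy--Schwarz and Parseval on the second factor). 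For \eqref{eqn:WienerOrbits} your argument up to the production of $B\subseteq T$ with $|B|\gg|T|$ and $\E(B)\lesssim|T|^{14/5}$ matches the paper (Lemma~\ref{lem:SwaWinEB}), but the concluding step is left as a hope (``dyadic pigeonhole on the spectrum \dots iterative refinement''), which is not a proof; with Lemma~\ref{lem:KonShkWi} applied to $X=T$, $X_1=B$ the exponent $|T|^{2-1/2-7/5}=|T|^{1/10}$ drops out immediately.

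For \eqref{eqn:WienerfI} the gap is more serious: you claim $\E(f(A))\lesssim|A|^{5/2+o(1)}$ for the \emph{whole} set $A$. The factorisation $f(a_1)-f(a_2)=(a_1-a_2)L(a_1,a_2)$ reduces this to the multiplicative energy of the point set $P=\{(a_1-a_2,\,a_1+a_2+c)\}\subseteq(A-A)\times(A+A+c)$ with respect to hyperbolas $xy=\lambda$; but with $|A\pm A|\ll|A|$ the available point--plane/point--hyperbola energy bounds give only $\E^{\times}(A-A,A+A+c)\ll |A|^4/p+|A|^3$, i.e.\ nothing better than the trivial $|A|^3$, which yields no lower bound at all in the Fourier inequality. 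The paper does not prove a full-set energy bound; it proves the $|A|^{5/2}$ bound only for a positive-proportion subset $f(C)$, via Proposition~\ref{prop:bwdss2} applied to $g(x,y)=f(x)+f(y)$ (using the regularisation Lemma~\ref{lem:regularization2} and the three-variable expander estimate of Lemma~\ref{lem:KoMiPhSh3Pol}, with the hypothesis $|A+A|\ll|A|$ entering through $\E(B,A)\gg|A|^4|A+A|^{-1}\gg|A|^3$), and then again invokes Lemma~\ref{lem:KonShkWi} with $X=f(A)$, $X_1=f(C)$. So your overall architecture (energy bound plus Fourier lower bound) is the right one, but as written neither the full-set energy estimate nor the subset transfer is established.
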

Estimate~\eqref{eqn:WienerfI} provides a quantitative improvement to a result of Garcia \cite[Corollaries~8 and 9]{Garc} for quadratic polynomials. Using Proposition~\ref{prop:bwd3}, one can recover, under a more favourable $p$-constraint on the set in question, the estimate of Shkredov \cite[Corollary~2]{Shk} concerning a lower bound on the $l_1$ norm of exponential sums over multiplicatively structured sets. 
We further note that estimate~\eqref{eqn:WienerOrbits} appears to be new in the sense that it provides a new class of examples toward the modular Littlewood problem. Finally, we mention that orbits of dynamical systems, defined in \eqref{eqn:OrbitsDef}, provide natural examples of sets $T$ to which Theorems~\ref{thm:BCS1}, \ref{thm:BCS2} and \ref{thm:LWP} apply.

\section{Preliminaries}
\subsection{Energy preliminaries}
We record the following energy sub-additivity lemma, which is a consequence of the Cauchy-Schwarz inequality.
\begin{lemma}
\label{lem:Efsubadd}
Let $f\in \F_q\left[x,y\right]$ and $V_1, \dots, V_k \subseteq \F_q$. Then
\[
E_{f}\bigg(\bigcup_{i=1}^{k}V_i\bigg) \leq \bigg(\sum_{i, j = 1}^k\E_{f}(V_i, V_j)^{1/2}\bigg)^2.
\]
Furthermore, if $f$ has the property that, for any $X, Y\subset \F_q$,
\begin{equation}
\label{eqn:EfCSCon}
   \E_{f}(X, Y) \ll\E_{f}(X)^{1/2}\E_{f}(Y)^{1/2},
\end{equation}
then we have
 \begin{equation}
 \label{eqn:EfSubaddCS2}
    \E_{f}\bigg(\bigcup_{i=1}^{k}V_i\bigg) \ll \bigg(\sum_{i=1}^k\E_{f}(V_i)^{1/4}\bigg)^4.
 \end{equation}
 \end{lemma}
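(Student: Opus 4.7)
The plan is to derive both inequalities from a pointwise bound on representation functions combined with Minkowski's inequality in $\ell^2$. Setting $V = \bigcup_{i=1}^k V_i$, I would first observe that every pair $(a,b) \in V \times V$ lies in $V_i \times V_j$ for some (not necessarily unique) indices $i,j$. Since each such pair contributes at least once on the right, this yields the pointwise inequality
\[
r_{f(V,V)}(\lambda) \leq \sum_{i,j=1}^k r_{f(V_i, V_j)}(\lambda)
\]
for every $\lambda \in \F_q$, with equality if the $V_i$ are pairwise disjoint.

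Next, recalling that $\E_f(V) = \sum_\lambda r_{f(V,V)}(\lambda)^2 = \|r_{f(V,V)}\|_{\ell^2}^2$ and similarly $\E_f(V_i, V_j) = \|r_{f(V_i, V_j)}\|_{\ell^2}^2$, I would take $\ell^2$ norms of the pointwise bound and apply the triangle inequality (Minkowski) to obtain
\[
\E_f(V)^{1/2} \;\leq\; \Bigl\|\sum_{i,j} r_{f(V_i, V_j)}\Bigr\|_{\ell^2} \;\leq\; \sum_{i,j=1}^k \|r_{f(V_i, V_j)}\|_{\ell^2} \;=\; \sum_{i,j=1}^k \E_f(V_i, V_j)^{1/2}.
\]
Squaring both sides delivers the first claimed inequality. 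This step is the crux, and it is essentially just the subadditivity of the $\ell^2$ norm applied to representation functions.

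For the second assertion, under hypothesis \eqref{eqn:EfCSCon} I would bound each cross-term by $\E_f(V_i, V_j)^{1/2} \ll \E_f(V_i)^{1/4} \E_f(V_j)^{1/4}$ and then factor the resulting double sum as a product:
\[
\sum_{i,j=1}^k \E_f(V_i, V_j)^{1/2} \;\ll\; \sum_{i,j=1}^k \E_f(V_i)^{1/4}\E_f(V_j)^{1/4} \;=\; \Bigl(\sum_{i=1}^k \E_f(V_i)^{1/4}\Bigr)^{\!2},
\]
and squaring gives the fourth-power bound \eqref{eqn:EfSubaddCS2}. The argument is elementary and I do not expect any real obstacle; the only point that deserves a moment of care is verifying that the pointwise bound on $r_{f(V,V)}$ survives overlaps among the $V_i$, so that no disjointness hypothesis on the $V_i$ is needed in the conclusion.
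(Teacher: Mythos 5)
Your proof is correct and is essentially the paper's argument: the paper first reduces WLOG to pairwise disjoint $V_i$, expands $\E_f\big(\bigcup_i V_i\big)=\sum_\lambda\big(\sum_{i,j}r_{f(V_i,V_j)}(\lambda)\big)^2$ and applies Cauchy--Schwarz termwise, which is exactly the proof of the $\ell^2$ triangle inequality you invoke as Minkowski. Your pointwise bound $r_{f(V,V)}(\lambda)\le\sum_{i,j}r_{f(V_i,V_j)}(\lambda)$ neatly sidesteps the disjointness reduction, and the second half (applying \eqref{eqn:EfCSCon} and factoring the double sum) is identical to the paper's.
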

   \begin{proof}

 Without loss of generality, we may assume that the sets $V_i$, $1\leq i\leq k$ are pairwise disjoint. Thus, by an application of the Cauchy-Schwarz inequality, we have
 \begin{align*}
     \E_{f}\bigg(\bigcup_{i=1}^{k}V_i\bigg) &= \sum_{i, j, k, l = 1}^k\sum_{\lambda\in \F_q}r_{f(V_i, V_j)}(\lambda)\cdot r_{f(V_k, V_l)}(\lambda) \\
     &\leq \sum_{i, j, k, l = 1}^k\bigg(\sum_{\lambda\in \F_q}r_{f(V_i, V_j)}(\lambda)^2\bigg)^{1/2} \bigg(\sum_{\lambda\in \F_q}r_{f(V_k, V_l)}(\lambda)^2\bigg)^{1/2} %\\
     %& = \Bigg(\sum_{i, j = 1}^k\bigg(\sum_{\lambda\in \F_q}r_{f(V_i, V_j)}(\lambda)^2\bigg)^{1/2}\Bigg)^{2}\\
     %&
     = \bigg(\sum_{i, j = 1}^k\E_{f}(V_i, V_j)^{1/2}\bigg)^2.
     \end{align*}
     Applying \eqref{eqn:EfCSCon} to the above inequality gives \eqref{eqn:EfSubaddCS2}.
 \end{proof}

Key to our results is the following regularisation lemma as recorded by Xue \cite{Xue}. Although Xue formulates the regularisation in $\mathbb{R}$, the proof is valid over abelian groups. This regularisation has origins in \cite[Proposition 16]{RudShSt}.

\begin{lemma}
\label{lem:regularization}
Let $A$ be a subset of an abelian group. There exist subsets $C\subseteq B\subseteq A$, with $|A|\ll |B| \ll (\log|A|)^{2} |C|$, a number $1\leq t\leq |B|$ and a set $D = \{x\in B - B:t\leq r_{B-B}(x)<2t\}$ such that 
\[
|D|t^2 \ll \E(B) \ll (\log|B|)|D|t^2
\]
and for any $c\in C$, 
\[
r_{D + B}(c) \gg \frac{|D|t}{|B|}\,.
\]
\end{lemma}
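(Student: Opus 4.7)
The plan is to combine two layers of dyadic pigeonholing---one over the representation function on $B-B$ (to meet the energy conclusion), and one over the function $r_{D+B}$ (to meet the popular-representation conclusion).

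For the first layer, I would take $B$ to be $A$ itself as a starting point and partition the nonzero part of $B - B$ into the dyadic level sets
\[
D_j = \{ x\in B-B : 2^{j-1} \leq r_{B-B}(x) < 2^j\}, \qquad 1 \leq j \leq \lfloor \log_2|B|\rfloor+1.
\]
Since $\E(B) = \sum_{x\in B-B} r_{B-B}(x)^2 \asymp \sum_j 2^{2j}|D_j|$ and the dyadic index $j$ ranges over only $O(\log|B|)$ values, pigeonholing produces some $j_0$ with $|D_{j_0}|\cdot 2^{2j_0}\gtrsim \E(B)/\log|B|$. Taking $D=D_{j_0}$ and $t=2^{j_0-1}$ then delivers both directions of the required energy sandwich $|D|t^2\ll \E(B)\ll (\log|B|)|D|t^2$.

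For the second layer, I would exploit the identity
\[
\sum_{c\in B} r_{D+B}(c) = \sum_{d\in D} r_{B-B}(d) \asymp |D|t,
\]
obtained by swapping summation and invoking $r_{B-B}(d)\asymp t$ on $D$. The average of $r_{D+B}(\cdot)$ on $B$ is therefore $\asymp |D|t/|B|$, so I would dyadically decompose $B$ according to $r_{D+B}$ and extract a level containing a constant fraction of this mass, taking $C$ to be that level; its elements then satisfy the desired pointwise bound $r_{D+B}(c) \gg |D|t/|B|$ by construction.

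The main obstacle will be producing the required lower bound $|C|\gtrsim |B|/(\log|A|)^2$. A naive Markov-style argument using only the trivial bound $r_{D+B}(c)\leq |D|$ yields merely $|C|\gtrsim t$, which may be far smaller than the desired target whenever $t$ itself is small. To overcome this, I would insert an additional regularisation step prior to fixing the level set $D$: dyadically decompose $A$ jointly according to $r_{D_j + A}(\cdot)$ across each dyadic index $j$, select the pair $(j_0,k_0)$ of dyadic energy-level and dyadic degree-level jointly carrying the bulk of the mass, and only then set $B$ to be the corresponding dyadic degree class within $A$. Following the strategy of \cite[Proposition 16]{RudShSt}, this joint double pigeonholing introduces a further $\log|A|$ loss and delivers $B\subseteq A$ with $|B|\asymp |A|$ on which $r_{D+B}(\cdot)$ is essentially constant across a subset $C\subseteq B$ of cardinality $\gtrsim |B|/(\log|A|)^2$. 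Along the way one must verify that the energy sandwich survives the restriction to $B$, which follows from $|B|\asymp|A|$ together with the fact that $D$ was chosen so that $r_{B-B}(d)\asymp t$ throughout $D$.
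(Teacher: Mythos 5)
The paper does not actually supply a proof of this lemma --- it is quoted from Xue \cite{Xue}, with origins in \cite[Proposition~16]{RudShSt} --- so your attempt must be measured against that argument. Your first two layers are correct and standard: the dyadic pigeonhole yielding $D$ and $t$ with $|D|t^2 \asymp \E(B)/\log|B|$, and the identity $\sum_{c\in B} r_{D+B}(c) = \sum_{d\in D} r_{B-B}(d) \asymp |D|t$, are exactly the right start, and you correctly isolate the crux: since $r_{D+B}(c)\leq |D|$, the elements above the average need only number $\gtrsim t$, which can be far smaller than $|B|/(\log|A|)^2$. (Concretely: if $A$ is the union of an arithmetic progression of size $|A|^{2/3}$ and a Sidon set, the dyadic class coming from the progression passes the energy sandwich, but the mass of $r_{D+A}$ then concentrates on the progression, so the popular set has size only about $|A|^{2/3}$.)

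Your proposed repair, however, does not close this gap. A double dyadic pigeonhole selects a class carrying a large fraction of some \emph{mass}, not a class of large \emph{cardinality}: taking $B$ to be the popular degree class gives no lower bound on $|B|$ beyond roughly $t/(\log|A|)^2$ (in the example above the selected class can again be the progression), so the claimed $|B|\asymp|A|$ is not delivered. Moreover, once $B\subsetneq A$ is fixed this way, $D$ was built from $r_{A-A}$ rather than $r_{B-B}$, and neither $r_{B-B}\asymp t$ on $D$ nor $\E(B)\gg \E(A)$ follows from $|B|\asymp|A|$ alone (deleting half the elements can destroy almost all of the energy), so the assertion that ``the energy sandwich survives the restriction'' is unsupported. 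The mechanism actually used in \cite{Xue} and \cite[Proposition~16]{RudShSt} is an energy-decrement \emph{iteration}, not a further pigeonhole: run your two layers on $B_i$ (with $B_0=A$); if the popular set $C_i$ satisfies $|C_i|\geq c|B_i|/(\log|A|)^2$, stop; otherwise observe that each $c\in C_i$ satisfies $E_c:=|\{(b,b',b'')\in B_i^3: c-b=b'-b''\}|\geq t_i\, r_{D_i+B_i}(c)$, whence $\sum_{c\in C_i}E_c\gg |D_i|t_i^2\gg \E(B_i)/\log|A|$, so deleting $C_i$ removes few elements but a $\gg 1/\log|A|$ proportion of the energy. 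Since $\E(B_i)\geq |B_i|^2\gg |A|^2$ while $\E(A)\leq|A|^3$, the energy can decrease by a factor $(1-c/\log|A|)$ at most $O((\log|A|)^2)$ times, so the process halts after removing at most $|A|/2$ elements in total; this is precisely where both $|B|\gg|A|$ and the $(\log|A|)^2$ in $|B|\ll(\log|A|)^2|C|$ come from. This decrement step is the missing idea in your write-up.
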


\subsection{Energy estimates and incidence theorems}
A typical application of incidence geometry is to energy estimates.

Using Rudnev's \cite{Rud18} incidence theorem between points and planes in $\mathbb{F}^3$, Koh, Mirzaei, Pham and Shen \cite[Theorem~2.1]{KoMiPhSh} obtained the following energy estimate, generalising a result of Pham, Vinh and de Zeeuw \cite{PhaVindeZ}.
\begin{lemma}
\label{lem:KoMiPhSh3Pol}
Given sets $U, V, W\subseteq \F_p^{\times}$, with $|U||V||W|\ll p^2$ and a quadratic polynomial $f\in \F_p\left[x, y, z\right]$, which depends on each variable and is not of the form $g(h(x)+k(y) + l(z))$, we have
\begin{align*}
    |\{(u_1, u_2, v_1, v_2, w_1, w_2)&\in U^2 \times V^2 \times W^2: f(u_1, v_1, w_1) = f(u_2, v_2, w_2)\}| \\ &\ll (|U||V||W|)^{3/2} + \max\{|U|^2|V|^2, |U|^2|W|^2, |V|^2|W|^2\}.
\end{align*}
\end{lemma}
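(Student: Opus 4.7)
My plan is to translate the counting problem into a point-plane incidence problem in $\F_p^3$ and then apply Rudnev's incidence theorem \cite{Rud18}. This is the natural three-variable extension of the two-variable strategy of Pham, Vinh and de Zeeuw \cite{PhaVindeZ} cited just before the lemma, and I expect the proof to follow roughly the same shape.

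The first step is to exploit the non-degeneracy of $f$. Because $f$ is quadratic, depends on each of $x,y,z$, and is not of the form $g(h(x)+k(y)+l(z))$, at least one honest cross-term, say $\alpha xy$ with $\alpha \neq 0$ after a relabelling of variables, must appear in $f$; otherwise $f$ would split additively as a sum of single-variable functions and fall into the excluded family. Using this cross-term together with identities such as $u_1 v_1 - u_2 v_2 = u_1(v_1-v_2) + v_2(u_1-u_2)$, I rewrite the equation $f(u_1,v_1,w_1) - f(u_2,v_2,w_2) = 0$ as a linear relation in three carefully chosen coordinate functions $\phi_1,\phi_2,\phi_3$ of the triple $(u_1,v_1,w_1)$, with coefficients depending on $(u_2,v_2,w_2)$. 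This produces a point set $P = \{(\phi_1,\phi_2,\phi_3)(u,v,w) : (u,v,w) \in U \times V \times W\} \subset \F_p^3$ of cardinality at most $|U||V||W|$ and a family $\Pi$ of planes indexed by $(u_2,v_2,w_2)$ of the same cardinality, in which every sextuple solution corresponds to exactly one point-plane incidence. The assumption $|U||V||W| \ll p^2$ is exactly what ensures $|\Pi| \ll p^2$, the plane-count hypothesis in Rudnev's theorem.

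Applying Rudnev's incidence bound then gives $I(P,\Pi) \ll (|U||V||W|)^{3/2} + K\,|U||V||W|$, where $K$ bounds the number of collinear points of $P$ on any line of $\F_p^3$. The first term is exactly the main term of the lemma, so the whole game is to control $K$ for the error term. For this I anticipate a case analysis on lines supporting many points of $P$: each such line corresponds to a one-parameter family of triples $(u,v,w)$ with two of the coordinates algebraically constrained, and the non-degeneracy of $f$ then forces the fibres on such lines to be small. Balancing the three natural degenerate regimes (corresponding to the pair among $u,v,w$ that is coupled) yields the three terms in $\max\{|U|^2|V|^2, |U|^2|W|^2, |V|^2|W|^2\}$. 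I expect this final step --- the careful enumeration of collinear degenerations and the use of the non-degeneracy hypothesis to preclude pathological lines --- to be the main obstacle, since it is where the full strength of the ``not of the form $g(h(x)+k(y)+l(z))$'' assumption really enters.
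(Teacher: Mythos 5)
The paper does not actually prove this lemma: it is quoted verbatim as \cite[Theorem~2.1]{KoMiPhSh}, and the surrounding text tells you the method --- Rudnev's point--plane incidence theorem in $\F^3$, generalising the two-variable argument of Pham, Vinh and de Zeeuw. Your strategy is therefore exactly the one the paper attributes to its source: extract a cross-term from the non-degeneracy hypothesis (correct --- a quadratic with no cross-terms splits as $h(x)+k(y)+l(z)$ and is excluded), linearise the difference $f(u_1,v_1,w_1)-f(u_2,v_2,w_2)$ via the bilinear identity, set up points and planes of size $|U||V||W|\ll p^2$, and read off the main term $(|U||V||W|)^{3/2}$ from Rudnev's bound. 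The arithmetic of the error term is also consistent with your plan: a line in the degenerate direction of the coordinate map can carry up to, say, $|W|$ points of $P$, and $|W|\cdot|U||V||W| = |U||V|\cdot|W|^2 \leq \max\{|U|,|V|\}^2|W|^2$, which is absorbed by the stated maximum; the three terms correspond to the three coordinate pairings, as you anticipate.

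That said, as a proof the proposal is a scaffold rather than an argument, and the part you defer is the entire mathematical content beyond a routine invocation of Rudnev's theorem. Concretely: (i) you never verify that the coordinate functions $\phi_1,\phi_2,\phi_3$ and the plane family can be chosen so that the correspondence between sextuples and incidences is genuinely one-to-one --- both the point map and the plane map can have multiplicities, and Rudnev's theorem is stated for unweighted sets, so one must either prove injectivity or run a weighted/dyadic version; (ii) the collinearity bound $K$ is asserted, not proved, and this is precisely where the hypothesis ``not of the form $g(h(x)+k(y)+l(z))$'' must be used in full --- the existence of a cross-term alone does not suffice, since e.g.\ $f=(x+y+z)^2$ has every cross-term yet is degenerate and the conclusion fails for it. You correctly identify this as the main obstacle, but identifying the obstacle is not the same as overcoming it, so the proposal cannot be accepted as a complete proof of the lemma; it is, however, the right route, and matches the one used in \cite{KoMiPhSh}.
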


For large sets we have a similar result, relying instead on point-line incidence estimates due to Vinh \cite{Vinh}:
\begin{lemma}
\label{lem:VinhPLI} 
Let $P$ denote a set of points and $L$ a collection of lines over $\F_q^2$. 
Then 
$$
%\bigg|I(P, L) - \frac{|P||L|}{q}\bigg|\leq \sqrt{q|P||L|}.
\bigg ||\{(p, l)\in P\times L: p\in l\}|- \frac{|P||L|}{q}\bigg|\leq \sqrt{q|P||L|}.
$$
\end{lemma}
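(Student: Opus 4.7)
The plan is to prove this via spectral graph theory, in the style of Vinh's original argument. I will identify a biregular bipartite graph whose spectrum can be computed exactly, and then convert an expander-mixing estimate for that graph into the desired incidence bound.

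First, I set up the bipartite graph $G$ with vertex classes the $q^2$ points and the $q^2 + q$ affine lines of $\F_q^2$, with a point joined to a line if and only if it lies on it. This graph is biregular: every point lies on $q+1$ lines, and every line contains $q$ points. The crucial structural fact is that any two distinct points determine a unique line, which translates to the matrix identity
\[
A A^T = qI + J,
\]
where $A$ is the $q^2 \times (q^2 + q)$ incidence matrix and $J$ is the all-ones matrix on the point side. Reading off eigenvalues, the singular values of $A$ are $\sqrt{q^2 + q}$ (with multiplicity one) and $\sqrt{q}$ (with multiplicity $q^2 - 1$); the biregularity relations $A\mathbf{1}' = (q+1)\mathbf{1}$ and $A^T\mathbf{1} = q\mathbf{1}'$ tell us that the top singular vectors are the all-ones vectors $\mathbf{1}$ and $\mathbf{1}'$ on the two sides.

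Given subsets $P$ and $L$ as in the statement, I write the incidence count as $I(P, L) = \mathbf{1}_P^T A \mathbf{1}_L$ and decompose $\mathbf{1}_P = \alpha \mathbf{1} + u$ and $\mathbf{1}_L = \beta \mathbf{1}' + v$ with $\alpha = |P|/q^2$, $\beta = |L|/(q^2 + q)$, and $u, v$ orthogonal to the respective all-ones vectors. Because $\mathbf{1}$ and $\mathbf{1}'$ are singular vectors of $A$, the two cross terms $\mathbf{1}^T A v$ and $u^T A \mathbf{1}'$ vanish. The main term $\alpha\beta\, \mathbf{1}^T A \mathbf{1}'$ simplifies, using the total incidence count $q(q^2 + q)$, to exactly $|P||L|/q$, while the remainder is bounded using the second singular value:
\[
|u^T A v| \leq \sqrt{q}\,\|u\|\,\|v\| \leq \sqrt{q\,|P|\,|L|},
\]
since $\|u\|^2 \leq |P|$ and $\|v\|^2 \leq |L|$ after Parseval.

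The only step of real substance is the identification $AA^T = qI + J$, which is the two-points-determine-a-line axiom rephrased in matrix form; everything afterwards is routine linear algebra, and there is no genuine obstacle. I note that the bound is essentially sharp because the second singular value of $A$ is exactly $\sqrt{q}$; and if the "collection of lines" $L$ in the statement is interpreted as a multiset, the same argument applies after replacing the line side with a weighted version, or simply by passing to the underlying set, since repeated lines do not affect the claimed bound.
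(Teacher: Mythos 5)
Your proof is correct and complete: the identity $AA^T = qI + J$, the resulting singular values $\sqrt{q^2+q}$ and $\sqrt{q}$, and the expander-mixing decomposition along $\mathbf{1}$ and $\mathbf{1}'$ all check out, including the main term $|P||L|/q$ and the error bound $\sqrt{q}\,\|u\|\,\|v\|\le\sqrt{q|P||L|}$. The paper gives no proof of this lemma, citing it directly from Vinh, and your argument is precisely Vinh's original spectral proof, so there is nothing further to compare.
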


Vinh's bound enables us to obtain a bound on the multiplicative representation function which we recycle into an energy estimate using standard techniques.
\begin{lemma}
\label{lem:SUB4dyadicMBW}
Let $A,B,C \subseteq \mathbb{F}_q^\times$ and  suppose that there are sets $Q,R \subseteq \mathbb{F}_q$ and a number $T\geq 1$ so that $r_{Q+R}(a)\geq T$ for each $a\in A$.
Then 
\[
\E^\times(A,B)  \ll \frac{|A||B|^2|Q||R|}{Tq} + \frac{q|Q||R||B|\log_2(|A|)}{T^2}\,.
\]
\end{lemma}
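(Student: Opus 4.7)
The plan is to convert $\E^\times(A,B) = \sum_x r_{A/B}(x)^2$ into a point-line incidence count in $\F_q^2$ amenable to Vinh's bound, via a dyadic decomposition of $r_{A/B}$. The hypothesis is used to extract a lower bound on incidences: for each $x\in A/B$, any representation $a = xb$ with $(a,b)\in A\times B$ produces, using $r_{Q+R}(a)\geq T$, at least $T$ triples $(b,q,r)\in B\times Q\times R$ with $xb = q+r$. These triples are pairwise disjoint across different $(a,b)$ because the common $b$-coordinate forces $a_1 = xb = a_2$. This yields
\[
T\cdot r_{A/B}(x) \leq |\{(b,q,r)\in B\times Q\times R : xb = q+r\}|\,.
\]

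Next, I would define the level sets $A_k = \{x\in A/B : 2^{k-1}\leq r_{A/B}(x) < 2^k\}$ for $1\leq k\leq K := \lfloor\log_2|A|\rfloor + 1$, and for each $k$ set up a Vinh configuration with point set $P = B\times Q\subseteq \F_q^2$ (of size $|B||Q|$) and line set $\mathcal{L}_k = \{\ell_{x,r} : q = xb - r\}_{(x,r)\in A_k\times R}$ (of size $|A_k||R|$, since distinct $(x,r)$ give distinct slope-intercept pairs). The incidences $\mathcal{I}_k$ count tuples $(b,q,x,r)$ with $xb = q+r$, so by the displayed inequality $\mathcal{I}_k \geq |A_k|\,T\,2^{k-1}$. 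Applying Lemma~\ref{lem:VinhPLI} then gives
\[
|A_k|\,T\,2^{k-1} \ll \frac{|A_k|\,|B||Q||R|}{q} + \sqrt{q|A_k|\,|B||Q||R|}\,,
\]
from which a short calculation shows that for each $k$ either (Case 1) $2^k \ll |B||Q||R|/(Tq)$, or (Case 2) $2^{2k}|A_k| \ll q|B||Q||R|/T^2$.

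To conclude, bound $\E^\times(A,B) \leq \sum_{k=1}^K 2^{2k}|A_k|$. Case 2 contributions are each $\ll q|B||Q||R|/T^2$, yielding $\ll q|B||Q||R|\log|A|/T^2$ after summation, which is the second desired term. For Case 1, combining with the trivial bound $|A_k|\leq 2|A||B|/2^k$ gives $2^{2k}|A_k| \ll 2^k |A||B|$, and the sum of the resulting geometric series is controlled by its maximum value $2^k \ll |B||Q||R|/(Tq)$, producing the first desired term $|A||B|^2|Q||R|/(Tq)$ without any logarithmic loss. The main obstacle I anticipate is precisely this last bookkeeping: naively summing bounds over $k$ would introduce a spurious $\log|A|$ factor on the first term, and it is important to recognise that in Case 1 the values $2^k$ form a geometric sequence whose sum is dominated by its maximum, so that no such factor arises.
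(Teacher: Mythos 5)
Your proof is correct and follows essentially the same route as the paper's: both convert the hypothesis $r_{Q+R}(a)\geq T$ into a factor-$T$ amplification of the multiplicative representation count, apply Vinh's point--line incidence bound to the resulting configuration, and split dyadically so that the ``main term'' of Vinh yields $|A||B|^2|Q||R|/(Tq)$ and the error term yields $q|Q||R||B|\log_2(|A|)/T^2$. The only differences are cosmetic bookkeeping: the paper fixes the threshold $\tau = 4|Q||R||B|/(Tq)$ up front so that the first Vinh term can never dominate above threshold (absorbing your Case 1 into the sub-threshold contribution $\tau|A||B|$), whereas you run the case analysis at every dyadic level and sum Case 1 as a geometric series -- both correctly avoid the spurious logarithm on the first term.
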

\begin{proof}
Let us fix $\tau = \frac{4|Q||R||B|}{Tq}$ and write
$X_i = \{x\in AB\colon \tau 2^i < r_{AB}(x)\leq  \tau 2^{i+1}\}$. Then 
\begin{align*}
\E^\times(A,B) &= \sum_{x: r_{AB}(x) \leq\tau} r_{AB}^2(x) + \sum_{i\geq 0}\sum_{x\in X_i}r_{AB}(x)^2 \leq |A||B|\tau + \sum_{i\geq 0}|X_i|\tau^2 2^{2i+2}\,.
\end{align*}

We estimate $|X_i|$ using Lemma~\ref{lem:VinhPLI}:
\begin{align*}
\tau 2^i|X_i| &\leq |\{(x,a,b)\in X_i\times A\times B\colon x = ab\}| \leq \frac1{T}|\{(x,q,r,b)\in X_i\times Q\times R \times B \colon x = (q+r)b\}| \\
&\leq \frac{|X_i||Q||R||B|}{qT} + \frac{\sqrt{q |X_i||Q||R||B|}}{T}\,. 
\end{align*}

If the first term dominates, then we obtain $\tau \leq 2 \frac{|Q||R||B|}{qT2^i}$; our choice of $\tau$ yields a contradiction and so we have the estimate
\[
|X_i| \tau^2 \leq \frac{q|Q||R||B|}{T^2 2^{2i-2}}\,.
\]
Finally, plugging in these bounds into the above expression for $\E^\times(A,B)$ completes the proof.  
%\begin{align*}
%\E^\times(A,B) & \leq |A||B|\frac{4|Q||R||B|}{Tq} + \sum_{i\geq 0}\frac{q|Q||R||B|}{T^2 2^{2i-2}} 2^{2i+2}\\
%& \ll \frac{|A||B|^2|Q||R|}{Tq} + \frac{q|Q||R||B|\log_2(|A|)}{T^2}\,.
%\end{align*}
\end{proof}

We record an analogous result where Vinh's incidence estimate is replaced by a lemma of Roche-Newton, Shparlinski and Winterhof \cite[Lemma~2.2]{RNShWin}, which relies instead on Weil bounds.

\begin{lemma}\label{lem:RNShWin}
Let $A,B, C \subseteq \mathbb{F}_q^\times$ and suppose that there are sets $Q,R \subseteq \mathbb{F}_q$ and a number $T\geq 1$ so that $r_{Q+R}(c)\geq T$ for each $c \in C$. Let $f\in \F_q(x)$ denote a  rational function of degree $d$ that is non-degenerate in the sense of \eqref{eqn:RNSHWINDC}.
Then 
\[
\E(f(A), B)  \ll_d \frac{|A||B|^2|Q||R|}{Tq} + \frac{q|Q||R||B|\log_2(|A|)}{T^2}\,.
\]
\end{lemma}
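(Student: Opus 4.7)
The plan is to mirror the proof of Lemma~\ref{lem:SUB4dyadicMBW} nearly verbatim, substituting additive structure for multiplicative structure throughout, and replacing Vinh's point-line incidence count (Lemma~\ref{lem:VinhPLI}) by the Weil-bound-based incidence estimate \cite[Lemma~2.2]{RNShWin}. The rational function $f$ enters the argument through two $d$-factors that the notation $\ll_d$ absorbs: the at-most-$d$ bound on the fibres of $f$, and the implicit constant in the Weil-based incidence estimate. As in the previous lemma, I read the hypothesis ``$r_{Q+R}(c)\geq T$ for $c\in C$'' as applying with $c = f(a)$ for $a\in A$ (equivalently $f(A)\subseteq C$), so that the inflation step in the argument is available.

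First I would fix $\tau = 4|Q||R||B|/(Tq)$ and dyadically decompose
\[
\E(f(A), B) = \sum_x r_{f(A)-B}(x)^2 \leq |A||B|\tau + \sum_{i\geq 0}|X_i|\tau^2 2^{2i+2},
\]
where $X_i = \{x\in f(A)-B : \tau 2^i < r_{f(A)-B}(x) \leq \tau 2^{i+1}\}$; the trivial term already produces the first contribution $|A||B|^2|Q||R|/(Tq)$ to the claimed bound. For each $x\in X_i$ there are at least $\tau 2^i$ pairs $(a,b)\in A\times B$ with $f(a)-b = x$. Multiplying by the assumed lower bound $r_{Q+R}(f(a))\geq T$ and using that each value of $q+r$ has at most $d$ preimages under $f$ in $A$, I obtain
\[
T\tau 2^i|X_i| \leq d \cdot\bigl|\{(q,r,b) \in Q\times R\times B : q+r - b \in X_i\}\bigr|,
\]
and I would invoke \cite[Lemma~2.2]{RNShWin} (in place of Vinh's estimate) to bound the right-hand side by $|X_i||Q||R||B|/q + O_d(\sqrt{q|X_i||Q||R||B|})$.

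The choice of $\tau$ rules out the first term dominating, so the second does, yielding $|X_i| \ll_d q|Q||R||B|/(T^2\tau^2 2^{2i})$ and hence $|X_i|\tau^2 2^{2i+2} \ll_d q|Q||R||B|/T^2$. Since $r_{f(A)-B}(x) \leq d|A|$, only $O_d(\log|A|)$ dyadic ranges contribute, and summing produces the second term $q|Q||R||B|\log_2|A|/T^2$ of the claimed bound. The main subtlety is setting up the incidence count in the precise form to which \cite[Lemma~2.2]{RNShWin} cleanly applies, and tracking the $d$-dependence uniformly so that it does not compound through the dyadic sum; apart from this, the argument is a direct additive transcription of the proof of Lemma~\ref{lem:SUB4dyadicMBW}.
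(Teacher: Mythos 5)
Your overall strategy --- transcribe the proof of Lemma~\ref{lem:SUB4dyadicMBW} and swap Vinh's incidence bound (Lemma~\ref{lem:VinhPLI}) for the Weil-based \cite[Lemma~2.2]{RNShWin} --- is exactly the intended one; the paper states the lemma as precisely this analogue and gives no separate proof. However, there is a genuine gap in how you set up the key count, stemming from a misreading of where the additive structure sits. You take the hypothesis $r_{Q+R}(c)\geq T$ to apply to the \emph{images} $c=f(a)$, i.e.\ you assume $f(A)\subseteq C$. The intended reading --- forced by the way the lemma is used in the proof of Theorem~\ref{thm:LEDRNSWI}, where the set carrying the structure $r_{D+B}(c)\gg |D|t/|B|$ is the preimage set $C\subseteq A$ while the energy being bounded is $\E(f(C))$ --- is that the structure is on the \emph{arguments} of $f$, i.e.\ $r_{Q+R}(a)\geq T$ for $a\in A$ (the $A$/$C$ mismatch in the hypothesis is a typo inherited from Lemma~\ref{lem:SUB4dyadicMBW}, where $C$ is likewise unused).

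This is not cosmetic, because it changes the shape of the quadruple count that the incidence input must handle. Under the correct reading, writing each $a$ with $f(a)-b=x$ as $a=q+r$ in at least $T$ ways gives
\[
T\tau 2^i|X_i| \;\leq\; |\{(x,q,r,b)\in X_i\times Q\times R\times B : f(q+r) = b+x\}|,
\]
and this count of solutions to $f(q+r)=s+t$ is precisely the quantity that \cite[Lemma~2.2]{RNShWin} bounds by $|Q||R||B||X_i|/q + O_d\big(\sqrt{q|Q||R||B||X_i|}\big)$ via Weil's bound for sums of the form $\sum_{u}\psi(\lambda f(u))$; this is the only place where the non-degeneracy condition \eqref{eqn:RNSHWINDC} enters. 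Under your reading, the count you arrive at, $|\{(q,r,b):q+r-b\in X_i\}|$, contains no occurrence of $f$ at all: \cite[Lemma~2.2]{RNShWin} does not apply to it (you would be back to Vinh or a Fourier computation), the non-degeneracy hypothesis becomes vacuous, and the lemma would collapse to a purely additive statement about the set $f(A)$. (The factor $d$ you insert for the fibres of $f$ is not needed in either reading.) With the count corrected as above, the rest of your argument --- the choice of $\tau$, the dominance analysis between the two terms, and the $O(\log|A|)$ dyadic ranges --- goes through verbatim and recovers the stated bound.
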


\section{Proof of Theorem~\ref{thm:bwdss}}
The proof of Theorem~\ref{thm:bwdss} proceeds in three stages. First we show how to obtain a subset with advantageous additive structure. Then we demonstrate how this additive structure enables a suitable energy bound on $\E_f$. Finally we apply an algorithmic procedure of Balog and Wooley \cite{BW} to provide a decomposition of $A$.

\subsection{Finding a subset with additive structure}
The procedure to find a subset with additive structure is an involved pigeonholing argument with geometry in the background. This lemma can also be extracted from the proof of ~\cite[Proposition~3.1]{RudShSt}.
\begin{lemma}
\label{lem:EnergyEnergyPigeonh}
Let $X\subset \F$. Then:
\begin{enumerate}[label=(\roman*)]
    \item there exist sets $D \subset X+X$ and $1\leq \tau \leq |X|$ so that 
\begin{equation*}
    \E(X)\gg \frac{|D|\tau^2}{\log|X|} \quad \text{and}\quad r_{X+X}(d)\in [\tau, 2\tau) \quad \text{for all}\quad d\in D\,.
\end{equation*}
 \item there exists $X_*\subseteq X$ so that 
\begin{equation*}
    \label{eqn:A*LNEnergy}
    |X_*|^2 \gg \frac{\E(X)}{|X| (\log|X|)^{7/2}} \quad \text{and} \quad 
    r_{D-X}(x) \geq \frac{|D|\tau}{|X_*|(\log|X|)^2} \quad \text{for all} \quad x\in X_*.
\end{equation*}
\end{enumerate}
\end{lemma}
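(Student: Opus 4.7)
The plan is to prove each part via a dyadic pigeonhole argument, with part~(ii) requiring a second pigeonhole on $X$ set up from the information supplied by part~(i).

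Part~(i) should be immediate. Since $\E(X) = \sum_{d\in X+X} r_{X+X}(d)^2$ and $1\leq r_{X+X}(d)\leq |X|$ on the support, I would partition the support of $r_{X+X}$ into the $O(\log|X|)$ dyadic level sets $D_j = \{d : 2^j\leq r_{X+X}(d) < 2^{j+1}\}$. A pigeonhole over the index $j$ picks out one level contributing at least a $1/\log|X|$ fraction of $\E(X)$; setting $D=D_j$ and $\tau=2^j$ then yields the two conclusions of~(i).

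For~(ii), I would start by double-counting pairs. The hypothesis $r_{X+X}(d)\geq \tau$ on $D$ gives
\[\sum_{y\in X} r_{D-X}(y) \;=\; \sum_{d\in D} r_{X+X}(d)\;\geq\; |D|\tau.\]
Discarding those $y\in X$ with $r_{D-X}(y) < |D|\tau/(2|X|)$ removes at most half of this mass, so the surviving $y$ still contribute at least $|D|\tau/2$. Their representation counts lie in the range $[|D|\tau/(2|X|),\,\min(|D|,|X|)]$, which spans only $O(\log|X|)$ dyadic scales, and a second pigeonhole produces a subset $X_*\subseteq X$ and a number $R$ with $r_{D-X}(y)\geq R$ for every $y\in X_*$ and $R\,|X_*|\gtrsim |D|\tau/\log|X|$. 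Rearranging immediately gives $r_{D-X}(y)\geq R\geq |D|\tau/\bigl(|X_*|(\log|X|)^2\bigr)$ on $X_*$, as required.

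The only step I see as a genuine obstacle is the simultaneous lower bound on $|X_*|$. The resolution is the elementary pointwise inequality $r_{D-X}(y)\leq \min(|D|,|X|)\leq\sqrt{|D|\,|X|}$, which forces $R\leq\sqrt{|D|\,|X|}$. Then
\[|X_*|\;\gtrsim\;\frac{|D|\tau}{R\log|X|}\;\geq\;\frac{\tau}{\log|X|}\sqrt{\frac{|D|}{|X|}},\]
and squaring and substituting $|D|\tau^2\gtrsim \E(X)/\log|X|$ from part~(i) delivers $|X_*|^2 \gtrsim \E(X)/\bigl(|X|(\log|X|)^3\bigr)$, which is strictly stronger than the stated bound. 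The entire argument is a matter of bookkeeping dyadic pigeonholes and logarithmic factors; the looser exponents $(\log|X|)^{7/2}$ and $(\log|X|)^2$ in the statement presumably leave room for the paper's preferred form of the intermediate estimates.
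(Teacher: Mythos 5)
Your proof is correct, and for part~(ii) it takes a genuinely different route from the paper. The paper realises the mass $\sum_{d\in D}r_{X+X}(d)$ as a point set $P_1\subseteq X\times X$ and runs a two-stage pigeonhole: first on columns, producing a set $V$ of popular abscissae with parameter $\kappa_1$; then, in the case $|V|<\kappa_1(\log|X|)^{-1/2}$, a second pigeonhole on the rows of $P_2=P_1\cap(V\times X)$, together with a short contradiction argument showing the resulting set of ordinates is not too small. The two cases give the exponents $5/2$ and $7/2$ on the logarithm, and the lemma is stated with the worse one. You instead discard the elements with $r_{D-X}(y)<|D|\tau/(2|X|)$, perform a single dyadic pigeonhole on the surviving level sets to get $R|X_*|\gtrsim |D|\tau/\log|X|$ with $r_{D-X}\geq R$ on $X_*$, and then extract the lower bound on $|X_*|$ from the pointwise bound $r_{D-X}(y)\leq\min(|D|,|X|)\leq\sqrt{|D||X|}$ --- an observation the paper never uses. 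This is shorter, avoids the case analysis entirely, and yields $|X_*|^2\gtrsim \E(X)/(|X|(\log|X|)^{3})$, which is slightly stronger than the stated bound. The one cosmetic caveat (shared by the paper's own proof) is that the conclusion $r_{D-X}(x)\geq |D|\tau/(|X_*|(\log|X|)^{2})$ is written without an implied constant, so the pigeonhole constant must be absorbed into the spare factor of $\log|X|$, which is legitimate once $|X|$ exceeds an absolute constant; small $X$ are trivial.
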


\begin{proof}
% dyadic decompostion.
Let us write $D_i=\{d\in X + X: r_{X+X}(d)\in [2^i,2^{i+1})\}$ for $i = 0,\dots, \lceil\log_2|X|\rceil$. Writing
\[
\E(X) = \sum_{i\geq 0}\sum_{d\in D_i}r_{X+X}^2(d)\,,
\]
a pigeonholing argument shows that there exists $i_0$ so that, setting $D = D_{i_0}$ and $\tau = 2^{i_0}$, we have  
\begin{equation}
\label{eqn:Energytau2D}
\frac{\E(X)}{\log|X|}\ll \tau^{2}|D| \ll \E(X)\,.
\end{equation}

We now extract the subset $X_*\subseteq X$ in a geometrically motivated manner. 
% geometric picture
Let $P_1 = \{(x, y) \in X\times X: x+y\in D\}$ be the set of points in $X\times X$ lying on the line with slope $-1$ and with a vertical axis-intercept in $D$.
For $x\in X$, let $A_x = \{y: (x, y) \in P_1\}$ be the set of ordinates of $P_1$ and note that 
\begin{equation}
\label{eqn:P1Size}
\sum_{x\in X}|A_x| = |P_1| \quad \text{and}\quad \tau|D| \leq |P_1| < 2\tau|D|.
\end{equation}

By a dyadic pigeonholing argument, we extract a set of ``popular abscissae": 
let $V_i =\{x\in X: 2^i \leq |A_x |< 2^{i+1}\}$ for $i = 0,\dots, \lceil \log_2|A|\rceil$. Then 
\[
|P_1| = \sum_{i\geq 0}\sum_{v\in V_i}|A_v|
\]
and so by a pigeonholing argument there exists $i_0$ so that, setting $V=V_{i_0}$ and $\kappa_1 = 2^{i_0}$ we have
\begin{equation}
\label{eqn:VKappa1LB1}
    |V|\kappa_1 \gg \frac{|P_1|}{\log |X|} \gg \frac{\tau|D|}{\log |X|}.
\end{equation}

%to translate to formal maths:
%if i draw a straight line through every v in V, it will it hit about kappa_1 points in X\times X lying on lines of the form Y+X = d for d \in D.
Note that for $v\in V$, we have $|A_v|\in [\kappa_1,2\kappa_1)$. Geometrically, this corresponds to at least $\kappa_1$ intersections between $P_1$ and the vertical line through $(v,0)$. Thus $r_{D-X}(v)\geq \kappa_1$ for each $v\in V$.

We now split into two cases.

{\bf Case 1:} $|V|\geq \kappa_1 (\log|X|)^{-1/2}$.\\ %not certain about the logs
In this case, we take $ X_* = V$ and $\kappa = \kappa_1$. We have $r_{D-X}(x)\geq \kappa$ for each $x\in X_*$. Moreover, from \eqref{eqn:VKappa1LB1}, it follows that $\kappa \gg \tau |D|(\log|X||X_*|)^{-1}$.
We have 
\[
|X_*|\geq \kappa (\log|X|)^{-1/2} \gg \frac{\tau |D|}{(\log|X|)^{3/2}|X_*|} = \frac{\tau^2 |D|}{(\log|X|)^{3/2}|X_*|\tau}\gg \frac{\E(X)}{(\log|X|)^{5/2} |X_*|}\,.
\]

This concludes the proof of the theorem in Case 1. 

{\bf Case 2:} $|V|< \kappa_1 (\log|X|)^{-1/2}.$\\
We perform another pigeonholing argument to yield a set of ``popular ordinates":
Define $P_2 = \{(x, y) \in P_1: x\in V\}$. By definition of $V$, we have $\kappa_1\leq |A_x|<2 \kappa_1$  for each $x\in V$ and so
\begin{equation*}
    \label{eqn:P2Size}
    |V|\kappa_1\leq |P_2| < 2 |V|\kappa_1\,.
\end{equation*}
For $y\in X$, define the set of ordinates $B_y = \{x: (x, y)\in P_2\}$. Note that
$
\sum_{y\in X}|B_y| = |P_2|. 
$

Arguing by a dyadic pigeonhole argument as before, we obtain $1\leq \kappa_2\leq |V|$ and $U = \{y\in X: \kappa_2\leq |B_y|<2\kappa_2\}$ so that (using \eqref{eqn:VKappa1LB1})
\begin{equation}
\label{eqn:Ukappa2LB1}
|U|\kappa_2 \gg \frac{|P_2|}{\log |X|} \geq \frac{|V|\kappa_1}{\log |X|} \gg \frac{\tau |D|}{(\log|X|)^2}\,.
\end{equation}

We can interpret the set $U$ geometrically as follows: for any $u\in U$, the horizontal line through $(0,u)$ intersects at least $\kappa_2$ points of $P_2$. Hence $r_{D-X}(u) \geq \kappa_2$ for each $u\in U$.

Suppose to the contrary that $|U|\ll \kappa_2(\log|X|)^{-1/2}$. Then 
$$|U|\kappa_2 \ll \kappa_2^2(\log|X|)^{-1/2} \leq |V|^2(\log|X|)^{-1/2} < |V|\kappa_1 (\log|X)^{-1}\,.$$ 
For a suitable choice of constants, this contradicts \eqref{eqn:Ukappa2LB1}. Hence we assume that $|U|\gg \kappa_2(\log|X|)^{-1/2}$.

Set $X_* = U$ and $\kappa = \kappa_2$. By \eqref{eqn:Ukappa2LB1} we have $r_{D-X}(x)\geq \tau |D|(|X_*|(\log|X|)^2)^{-1} $ for all $x\in X_*$. 

Furthermore, 
\[
|X_*|\gg \kappa (\log|X|)^{-1/2} \gg \frac{\tau |D|}{|X_*|(\log|X|)^{5/2}}= \frac{\tau^2 |D|}{\tau|X_*|(\log|X|)^{5/2}}\gg \frac{\E(X)}{|X||X_*|(\log|X|)^{7/2}}\,.
\]
\end{proof}

\subsection{Mixed energy bounds}
We will show how the advantageous additive structure from the subset derived in the previous subsection is amenable to mixed energy bounds. This is a preliminary decomposition result towards Theorem~\ref{thm:bwdss}.

We will apply Lemma \ref{lem:KoMiPhSh3Pol} and so require the following claim, the justification of which is provided within the proof of \cite[Theorem~1.10]{KoNassPhamVal}.

\begin{claim}
\label{claim:nondegbi23}
Let $f\in \F_q\left[x, y\right]$ denote a non-degenerate quadratic polynomial. Define $f^{'}\in \F_q\left[x, y, z\right]$ by $f^{'}(x, y, z) = f(x\pm z, y)$. Then $f^{'}$ is not of the form $g(h(x)+k(y) + l(z))$ for any univariate polynomials $g, h, k, l$ over $\F_q$. 
\end{claim}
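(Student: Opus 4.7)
The plan is to argue by contradiction: suppose $f'(x,y,z)=G(H(x)+K(y)+L(z))$ for univariate polynomials $G,H,K,L\in\F_q[u]$, and derive a contradiction with the non-degeneracy of $f$. The key rigidity observation is that $f'(x,y,z)=f(x\pm z,y)$ depends on $x,z$ only through the combination $x\pm z$, so $\partial_x f'=\pm\,\partial_z f'$. Applying the chain rule to the right-hand side then yields, inside the integral domain $\F_q[x,y,z]$,
\[
\bigl(H'(x)\mp L'(z)\bigr)\cdot G'(H(x)+K(y)+L(z))=0.
\]
If $G'\not\equiv 0$, then since $H+K+L$ is nonconstant, the factor $G'(H+K+L)$ is nonzero in $\F_q[x,y,z]$, so $H'(x)=\pm L'(z)$ as polynomials in disjoint single variables, which forces both to equal a common constant; hence $H$ and $L$ are affine linear. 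The edge case $G'\equiv 0$ is handled either by Frobenius reduction (in odd characteristic, if $G$ is a polynomial in $u^p$ and $H+K+L$ is nonconstant, then $\deg f'\geq p\cdot\deg(H+K+L)\geq 3$, contradicting $\deg f'=2$) or by observing that $G(H+K+L)$ would then be a sum of univariate polynomials, subsumed into the degree-$1$ subcase below.

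Writing $H(x)=ax+b$ and $L(z)=\pm az+c$, and setting $t=x\pm z$ to match the sign in $f'$, the hypothesis collapses to
\[
f(t,y)=\tilde G\bigl(at+K(y)\bigr),\qquad \tilde G(u):=G(u+b+c).
\]
Thus $f$ itself admits a representation as a univariate polynomial evaluated at $at+K(y)$. Since $f$ depends on $x$ by non-degeneracy, we must have $a\neq 0$, and we are reduced to ruling out this restrictive structure using $\deg f=2$.

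The final step is a degree analysis. Matching total degrees in $f(t,y)=\tilde G(at+K(y))$ leaves two subcases. If $\deg\tilde G=2$, then $\deg K\leq 1$; absorbing constants gives $f(t,y)=\bar G(at+\kappa y)$ with $\kappa\neq 0$ (else $f$ is independent of $y$), placing $f$ in the forbidden class $g(\alpha x+\beta y)$ of Definition~\ref{def:DegPoly} and contradicting non-degeneracy. If $\deg\tilde G=1$, then $f(t,y)$ is a sum of univariate polynomials in $t$ and $y$ with no $ty$-cross term, forcing the $t^2$ and $ty$ coefficients of $f$ to vanish; this contradicts $f$ being a bona fide bivariate quadratic. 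The main obstacle I anticipate is precisely this second subcase: ruling it out cleanly requires reading ``non-degenerate quadratic polynomial'' slightly more stringently than Definition~\ref{def:DegPoly} alone---specifically, the natural interpretation is that the leading quadratic form of $f$ involves both variables non-trivially, matching the hypothesis used in the reference \cite{KoNassPhamVal} and the intended downstream application via Lemma~\ref{lem:KoMiPhSh3Pol}. The reduction to $H,L$ affine and the first subcase, by contrast, are quite robust.
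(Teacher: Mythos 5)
The paper offers no proof of this claim at all: it is justified only by a pointer to the proof of \cite[Theorem~1.10]{KoNassPhamVal}. Your self-contained derivative-and-degree argument is therefore a genuinely different (and more transparent) route, and its skeleton is sound: differentiating $f(x\pm z,y)=G(H(x)+K(y)+L(z))$ in $x$ and in $z$, cancelling the nonzero factor $G'(H+K+L)$ in the integral domain $\F_q[x,y,z]$, and then substituting $z=0$ to reduce to $f(x,y)=\tilde G(ax+K(y))$ is exactly the right mechanism. One small repair is needed: over $\F_q$ a constant derivative does not by itself force a polynomial to be affine ($H(x)=x^p$ has $H'\equiv 0$), so the step ``$H'=\pm L'$ constant, hence $H,L$ affine'' should be preceded by the observation that $\deg G\cdot\deg(H+K+L)=\deg f'=2$ forces $\deg H,\deg L\le 2$; a polynomial of degree at most $2$ with constant derivative is affine because $2\neq 0$ in $\F_q$. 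The same degree identity is the cleaner way to dispose of your $G'\equiv 0$ branch (there $\deg G\ge p\ge 3$, which is incompatible with $\deg f'=2$).

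The more important point is the one you flag at the end, and you should state it as a counterexample rather than an ``anticipated obstacle'': with Definition~\ref{def:DegPoly} as written, the claim is false. Take $f(x,y)=x+y^2$. It depends on each variable and is not of the form $g(ax+by)$ (a $g(ax+by)$ with a $y^2$ term but no $x^2$ term forces $a=0$ and hence no $x$-dependence), so it is non-degenerate in the sense of Definition~\ref{def:DegPoly}; yet $f(x\pm z,y)=x\pm z+y^2$ is literally of the form $h(x)+k(y)+l(z)$ with $g$ the identity. This is precisely your $\deg\tilde G=1$ subcase, and nothing in Definition~\ref{def:DegPoly} rules it out. The claim therefore needs the extra hypothesis that the degree-two part of $f$ genuinely involves the shifted variable (equivalently, that $f$ is not of the form $cx+k(y)$); with that strengthening your proof closes completely. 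This is a defect in the paper's statement rather than in your argument, and it is worth noting that it propagates to the applications of the claim via Lemma~\ref{lem:KoMiPhSh3Pol}.
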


We now state and prove our mixed energy bound.

\begin{proposition}\label{prop:bwd3}
Let $f\in \F_p\left[x, y\right]$ be a non-degenerate quadratic polynomial and let $A\subseteq \F_p$. Suppose $X\subseteq A$ satisfies the conditions
\begin{equation}
\label{eqn:Bilemcharcon}
    |X|^{5}|A| \leq p^{2}\E(X)
\end{equation}
and
\begin{equation}
\label{eqn:BienergyLBCon}
\E(X)\gg |A|^{3-1/3}.
\end{equation}
Then there exists a set $X_* \subseteq X$, with $|X_*|^2\gtrsim \E(X)|X|^{-1}$ such that for any $Y\subseteq A$, with $|Y|\geq |X_*|$, we have 
\begin{equation}
\label{eqn:lembiEX*UB}
    \max\{\E_{f}(X_*, Y), \E_{f}(Y, X_*)\} \lesssim \frac{|X_*|^{4}|Y|^{3/2}|A|^{3/2}}{\E(X)^{3/2}}.
\end{equation}
\end{proposition}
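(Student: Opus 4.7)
The plan is to apply Lemma~\ref{lem:EnergyEnergyPigeonh} to $X$ to extract a subset $X_* \subseteq X$ together with popular-difference data $(D, \tau)$, then exploit the resulting additive structure to rewrite $f(a, b)$ as a trivariate quadratic form amenable to Lemma~\ref{lem:KoMiPhSh3Pol}.

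Concretely, Lemma~\ref{lem:EnergyEnergyPigeonh} produces $D \subseteq X+X$, a scale $\tau \geq 1$, and $X_* \subseteq X$ satisfying, up to logarithmic factors, $|D|\tau^2 \sim \E(X)$, $|X_*|^2 \gtrsim \E(X)/|X|$, and $r_{D-X}(a) \gtrsim M := \tau|D|/|X_*|$ for every $a \in X_*$. Examining the proof also yields $|X_*|^2 \gtrsim \tau|D|$, a strengthening we shall need. For each $a \in X_*$, writing $a = d - x'$ with $d \in D$, $x' \in X$ gives $f(a, b) = f'(d, b, x')$ for $f'(d, b, x') := f(d - x', b)$, which by Claim~\ref{claim:nondegbi23} satisfies the non-degeneracy hypothesis of Lemma~\ref{lem:KoMiPhSh3Pol}. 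The pointwise inequality $M \cdot r_{f(X_*, Y)}(\lambda) \leq r_{f'(D, Y, X)}(\lambda)$ follows by counting, and squaring and summing over $\lambda$ yields
\[
M^2 \cdot \E_f(X_*, Y) \leq \E_{f'}(D, Y, X).
\]

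Before applying Lemma~\ref{lem:KoMiPhSh3Pol} we verify the characteristic constraint $|D||Y||X| \ll p^2$. The bound $|D| \leq |X|^4/\E(X)$, obtained from $|D|\tau^2 \sim \E(X)$ together with $|D|\tau \leq |X|^2$, combined with $|Y| \leq |A|$ and condition \eqref{eqn:Bilemcharcon}, delivers this. Lemma~\ref{lem:KoMiPhSh3Pol} then bounds $\E_{f'}(D, Y, X)$ by a sum of four terms; after dividing by $M^2 \gtrsim |D|\E(X)/|X_*|^2$, each resulting contribution is compared with the target $|X_*|^4 |Y|^{3/2} |A|^{3/2}/\E(X)^{3/2}$. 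For the main term, $|X_*|^2 \gtrsim \tau|D|$ together with the consequence $\tau|D|^{1/2} \geq \E(X)^{1/2}$ of $|D|\tau^2 \sim \E(X)$ reduces the ratio to $(|X|/|A|)^{3/2} \leq 1$; the three max terms are handled analogously using $|X_*|^2 \gtrsim \tau|D|$ in tandem with condition \eqref{eqn:BienergyLBCon} and the trivial bounds $|Y| \leq |A|$ and $|X| \leq |A|$. The estimate for $\E_f(Y, X_*)$ follows identically by substituting $b = d - x'$ on the second argument, with the polynomial $f''(a, d, z) := f(a, d-z)$ inheriting the requisite non-degeneracy from Claim~\ref{claim:nondegbi23} after swapping the roles of the variables of $f$.

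The main obstacle is the termwise bookkeeping: the target is tight, and controlling all four contributions simultaneously requires invoking the sharper inequality $|X_*|^2 \gtrsim \tau|D|$ implicit in the proof of Lemma~\ref{lem:EnergyEnergyPigeonh} rather than only the weaker $|X_*|^2 \gtrsim \E(X)/|X|$ stated directly; without this refinement the main term is off by a polynomial factor.
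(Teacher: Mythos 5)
Your proposal is correct and follows essentially the same route as the paper's proof: apply Lemma~\ref{lem:EnergyEnergyPigeonh} to extract $(X_*,D,\tau)$, use the popular-representation structure to pass to the trivariate energy for $f(d-x,y)$, invoke Lemma~\ref{lem:KoMiPhSh3Pol} with the $p$-constraint supplied by \eqref{eqn:Bilemcharcon}, and dispose of the max terms via \eqref{eqn:BienergyLBCon}. You also correctly isolated the key refinement $|D|\tau \lesssim |X_*|^2$, which the paper likewise uses, describing it as implicit in the proof of Lemma~\ref{lem:EnergyEnergyPigeonh}.
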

\begin{proof}
We apply Lemma~\ref{lem:EnergyEnergyPigeonh} to the set $X$ and henceforth assume its full statement and notation. 

By the additive structure of $X_*$, namely that for each $x\in X_*$, $r_{D-X}(x) \gtrsim |D|\tau |X_*|^{-1}$, we have
\begin{align}
\label{eqn:EfUBE}
\nonumber    \E_f(X_*, Y) &= |\{(x_1, x_2, y_1, y_2)\in X_*^2\times Y^2: f(x_1, y_1) = f(x_2, y_2)\}|\\
    &\lesssim \frac{|X_*|^2}{|D|^2\tau^2}|\{(d_1, d_2, x_1, x_2, y_1, y_2)\in D^2\times X^2\times Y^2: f(d_1-x_1, y_1) = f(d_2-x_2, y_2)\}|.
\end{align}

Define $f^{'}(u, v, w) = f(u-v, w)$. By Claim~\ref{claim:nondegbi23}, it follows that $f^{'}$ is not of the form $g(h(x)+k(y) + l(z))$. Using Lemma~\ref{lem:KoMiPhSh3Pol}, we will obtain an upper bound on the quantity
\begin{equation}
\label{eqn:3vpolyEdefn}
E := |\{(d_1, d_2, x_1, x_2, y_1, y_2)\in D^2\times X^2\times Y^2:f^{'}(d_1, x_1, y_1) = f^{'}(d_2, x_2, y_2)\}|\,.
\end{equation}

We justify the $p$-constraint necessary to the application of Lemma~\ref{lem:KoMiPhSh3Pol}: we require that $|D||X||Y|\ll p^2$. Note that 
$|X_*|^2\tau \geq \tau^2|D|\gg \E(X)(\log|X|)^{-1}$ and so $\tau \gg \E(X)(|X_*|^2\log|X|)^{-1}$; hence, since $|D|\tau^2 \leq \E(X)$ we have the upper bound
% \E(X)^2(|X_*|^2\log|X|)^{-2}|D|\ll\tau^2|D|\leq \E(X) 
\begin{equation}
    \label{eqn:UBD}
    |D|\ll |X_*|^4(\log|X|)^2\E(X)^{-1}. 
\end{equation}
Thus, $|D||X||Y|\ll (\log|X|)^2|X_*|^4|X||A|(\E(X))^{-1} \leq p^2(\log|X|)^2$. By the assumption \eqref{eqn:Bilemcharcon}, the required bound holds up to a logarithmic factor; to drop this factor, we replace $X_*$ with any of its subsets of size $\geq |X_*|(\log|X|)^{-1}$, without changing notation.
This affects only the logarithmic factor in the lower bound on $|X_*|$, of which we do not keep track.

By Lemma~\ref{lem:KoMiPhSh3Pol}, we have
\begin{align}
\label{eqn:EUBWMax}
     E &\ll (|D||X||Y|)^{3/2} + \max\{|D|^2|X|^2, |D|^2|Y|^2, |X|^2|Y|^2\}\nonumber\\
     &\ll (|D||A||Y|)^{3/2} + \max\{|D|^2|A|^2, |A|^2|Y|^2\}.
\end{align}
We will show that, up to a logarithmic factor, the first term of \eqref{eqn:EUBWMax} dominates. We split into two cases depending on $M:= \max\{|D|^2|A|^2, |D|^2|Y|^2, |A|^2|Y|^2\}$.

{\bf Case 1:} $M=|D|^2|A|^2$.
It suffices to show $|A||D| \lesssim |Y|^3$. Applying \eqref{eqn:UBD} and the assumptions~\eqref{eqn:BienergyLBCon} (which gives $\E(X)\gg |A|^{8/3}$) and $|X_*|\leq |Y|\leq |A|$ yields
\begin{align*}
    |A||D| \lesssim \frac{|A||X_*|^4}{\E(X)} \leq \frac{|A||X_*|^4}{|A|^{8/3}} \leq \frac{|Y|^3|X_*|}{|A|^{5/3}} \leq |Y|^3\,.
\end{align*}

%{\bf Case 2:} $M=|D|^2|Y|^2$.
%We proceed to show $|Y||D|\lesssim |A|^{3}$. Again, relying on \eqref{eqn:DUBLB}, we have
%$$
%|Y||D| \lesssim \frac{|Y||X_*|^4}{E_{+}(X)} \leq \frac{|Y||X_*|^4}{|X|^2} \leq |Y||X|^2 \leq |A|^3.
%$$

{\bf Case 2:} $M=|X|^2|Y|^2$.
Here, we wish to show $|X||Y|\lesssim |D|^3$. Recalling the lower bound $|D|\gtrsim \E(X)|X|^{-2}$ and the assumption~\eqref{eqn:BienergyLBCon}, we have
\[
|D|^3\gtrsim \frac{\E(X)^3}{|X|^6}\geq \frac{|A|^8}{|X|^6} \geq |A|^2\geq |X||Y|\,.
\]

Finally, with the assumption that $E\lesssim (|D||A||Y|)^{3/2}$, we demonstrate the bound \eqref{eqn:lembiEX*UB}: from \eqref{eqn:EfUBE} we have
\begin{align*}
\E(X_*, Y) &\lesssim \bigg(\frac{|X_*|}{|D|\tau}\bigg)^2\cdot \Big(|D||A||Y|\Big)^{3/2} = \frac{|X_*|^{2}|A|^{3/2}|Y|^{3/2}}{|D|^{1/2}\tau^2}\cdot \frac{|D|\tau}{|D|\tau}\\
&\lesssim \frac{|X_*|^4|A|^{3/2}|Y|^{3/2}}{\E(X)^{3/2}}.
\end{align*}
Here we use the upper bound $|D|\tau \lesssim |X_*|^2$ implicit in the proof of Lemma~\ref{lem:EnergyEnergyPigeonh}. This scheme gives the same bound for $\E(Y,X_*)$.
\end{proof}

\subsection{Proof of Theorem~\ref{thm:bwdss}} 
To prove the strengthed decomposition of Theorem~\ref{thm:bwdss} we will use the following decomposition algorithm from Balog and Wooley \cite{BW}. At a high level, it involves iteratively removing subsets $B\subseteq A$ with large $\E(B)$ (as determined by a parameter $M$).

\begin{algorithm}[H]\label{algo}
\SetAlgoLined
\SetKwInOut{Input}{Input}\SetKwInOut{Output}{Output}

\Input{$A\subseteq \mathbb{F}$, suitably chosen $M \in [1,|A|] $}
\Output{Decomposition $A = S \sqcup T$}
\BlankLine
 Initialisation\: $S_0 = A$, $T_0 = \emptyset$\;
 \While{$\E(S_i) > |A|^3 M^{-1}$}{
    extract $B_i \subseteq S_i$ using Proposition~\ref{prop:bwd3} \;
    $S_{i+1} = S_i \setminus B_i$\;
    $T_{i+1} = T_i \sqcup B_i$\;
   }
 \caption{Decomposition Algorithm}
\end{algorithm}
When we apply Proposition~\ref{prop:bwd3} in the algorithm, we take $X = S_i$ and extract $X_* = B_i$.
Hence for any set $Y\subseteq A$ satisfying $|Y|\geq |B_i|$:
\[
\max\bigg\{\E_f(B_i, Y), \E_f(Y,B_i)\bigg\}^2 \lesssim \frac{M^3 |B_i|^8|Y|^3}{|A|^6}\,.
\]

At each stage $i$ of the algorithm we have $A = S_i \sqcup T_i$. By the uniform lower bound $|B_i|^2\geq |A|^2 M^{-1}$, the size of $S_i$ is uniformly decreasing and so the algorithm terminates.   
Suppose that the algorithm terminates at the $k$th iteration. That is, $A = S_k \sqcup T_k$ and $\E(S_k) \leq |A|^3 M^{-1}$. We aim to determine the parameter $M$ so that we also have $\E_f(T_k) \leq |A|^3 M^{-1}$. 

Let us reorder the $B_i$ so that $|B_1|\leq|B_2|\leq \dots$. Then 
\begin{align*}
    \E_f(T_k) &= \E_f\bigg(\bigsqcup_{i = 0}^k B_i \bigg) \leq \bigg( \sum_{i, j = 1}^k\E_{f}(B_i, B_j)^{1/2}\bigg)^2\\
    &= \bigg(\sum_{i=1}^k\sum_{j=1}^i \E_f(B_i,B_j)^{1/2} + \sum_{i=1}^k \sum_{j = i+1}^k \E_f(B_i,B_j)^{1/2}
    \bigg)^2\\ 
    & \lesssim \bigg(\sum_{i=1}^k \sum_{j = 1}^i \frac{M^{3/4} |B_j|^2|B_i|^{3/4}}{|A|^{3/2}}
    +
    \sum_{i = 1}^k\sum_{j=i+1}^k \frac{M^{3/4} |B_i|^2|B_j|^{3/4}}{|A|^{3/2}}
    \bigg)^2\\ 
%    & = \frac{M^{3/2}}{|A|^{3}}\bigg(\sum_{i = 1}^k\sum_{j=i+1}^k |B_i|^2|B_j|^{3/4}    +\sum_{i=1}^k \sum_{j = 1}^i |B_j|^2|B_i|^{3/4}    \bigg)^2\\
%    & \leq \frac{M^{3/2}}{|A|^{3}}\bigg(     \sum_{i=1}^k \sum_{j = 1}^i |B_j|^{7/4}|B_i| +    \sum_{i = 1}^k\sum_{j=i+1}^k |B_i|^{7/4}|B_j|    \bigg)^2\\
    & \leq \frac{M^{3/2}}{|A|^{3}}\bigg(\sum_{i = 1}^k\sum_{j=i+1}^k |B_i||B_j||A|^{3/4}
    +\sum_{i=1}^k \sum_{j = 1}^i |B_j||B_i||A|^{3/4} 
    \bigg)^2\\
%    & \leq \frac{M^{3/2}}{|A|^{3/2}}\bigg(\sum_{i = 1}^k|B_i|\sum_{j=1}^k |B_j|     +\sum_{i=1}^k |B_i|\sum_{j = 1}^k |B_j|    \bigg)^2\\ 
    & \ll M^{3/2}|A|^{5/2}\,. 
\end{align*}

Finally we optimise $M$: choosing $M = |A|^{1/5}$ yields $\max\{\E(S), \E_f(T)\}\lesssim |A|^{3 - \frac15}$\,.
 
It remains to justify the application of Proposition~\ref{prop:bwd3} with this choice of $M$. Each application of Proposition~\ref{prop:bwd3} proceeds under the assumption that $\E(S_i)>|A|^{3 -\frac15}$ and so clearly the condition \eqref{eqn:BienergyLBCon} is satisfied.

Let us now justify the $p$-constraint. Suppose for contradiction that, before terminating, there is $i$ so that $|S_i|^5|A|> p^2 \E(S_i)$. Then, recalling that $|A|\leq p^{5/8}$, we conclude that
\[
|S_i|^5 |A|> \bigg(|A|^\frac85\bigg)^2|A|^{3-\frac15} \implies |S_i| > |A|\,,
\]
which is a contradiction. This concludes the proof of Theorem~\ref{thm:bwdss}.

\section{Proofs of  Theorem~\ref{thm:LEDLL} and Theorem~\ref{thm:LEDRNSWI}}
Since both proofs are almost identical, we prove only  Theorem~\ref{thm:LEDLL}. To prove  Theorem~\ref{thm:LEDRNSWI}, it suffices to replace Lemma~\ref{lem:SUB4dyadicMBW} by Lemma~\ref{lem:RNShWin}.

Let $B, C, D$ be the sets given by Lemma~\ref{lem:regularization} so that $\E(B) \ll(\log|A|) |D|t^2$ for some $t\geq 1$.  We have
\[
r_{D+B}(c)\gg \frac{|D|t}{|B|}.
\]
Hence by Lemma~\ref{lem:SUB4dyadicMBW} we obtain
\begin{align*}
\label{eqn:ECXitorho}
\E^{\times}(C) &\ll \frac{|C|^3|D||B|^2}{ |D|tq} + \frac{q|D||B|^3|C|\log_2(|A|)}{|D|^2t^2} \\
&\leq \frac{|C|^3|B|^4}{ |D|t^2q} + \frac{q|B|^3|C|\log|A|}{|D|t^2} \\
&\leq \frac{|A|^7 \log|A|}{ \E(B)q} + \frac{q|A|^4(\log|A|)^2}{\E(B)}\,.
%|D|t^2> E/log
\end{align*}

\section{Proof of Theorem~\ref{thm:MirImp}} 
The strategy for proving Theorem~\ref{thm:MirImp} is as follows. First, we pass to subsets $C\subseteq B\subseteq A$ which enable `good' mixed energy bounds in terms of $\E(B,\cdot)$ and  $\E_f(C,\cdot)$. Then we amplify these mixed energy bounds by turning to the fourth moment energy using arguments of Rudnev, Shakan and Shkredov \cite{RudShaShk}. Finally, a result of Mirzaei \cite{Mirzaei} enables a good estimate on the fourth moment energy.

\subsection{Subsets and mixed energy bounds}
We require the following variant of Lemma~\ref{lem:regularization} which may be deduced from \cite[Proposition 1]{StWa}.
\begin{lemma}
\label{lem:regularization2}
Let $A$ and $V$ be finite subsets of an abelian group. There exist subsets $C\subseteq B\subseteq A$, with $|A|\ll |B| \ll (\log|A|)^{2} |C|$, a number $1\leq t\leq |B|$ and a set $D = \{x\in B - V:t\leq r_{B-V}(x)<2t\}$ such that 
\[
|D|t^2 \ll \E(B, V) \ll (\log|A|)|D|t^2
\]
and for any $c\in C$, 
\[
r_{D + V}(c) \gg \frac{|D|t}{|A|}\,. 
\]
\end{lemma}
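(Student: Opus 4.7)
My plan is to closely mimic the proof of Lemma~\ref{lem:regularization} (due to Xue~\cite{Xue}, with roots in \cite{RudShSt}), now replacing the symmetric difference set $B-B$ by the asymmetric difference $B-V$ throughout. The argument consists of two nested dyadic pigeonhole extractions, together with a careful choice of $B$ to ensure the required size relations.

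The first step is to dyadically partition $B-V$ by the level sets $D_i = \{x \in B-V : 2^{i-1} \leq r_{B-V}(x) < 2^{i}\}$. Since $r_{B-V}(x) \leq \min(|A|,|V|)$, only $O(\log|A|)$ dyadic levels contribute nontrivially to the identity $\E(B,V) = \sum_d r_{B-V}(d)^2$, so a pigeonhole on $i$ isolates some level $t$ and a set $D = D_i$ with $|D|t^{2} \gtrsim \E(B,V)/\log|A|$. The companion upper bound $|D|t^{2} \leq \E(B,V)$ is automatic, which yields the displayed estimates $|D|t^2 \ll \E(B,V) \ll (\log|A|)|D|t^2$.

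The second step is a double-counting identity:
\[
\sum_{d \in D} r_{B-V}(d) \;=\; \sum_{b \in B} r_{D+V}(b),
\]
combined with the lower bound $r_{B-V}(d) \geq t$ on $D$, gives $\sum_{b\in B} r_{D+V}(b) \geq |D|t$. Discarding those $b \in B$ with $r_{D+V}(b) < |D|t/(2|A|)$ accounts for at most $|D|t/2$ of the total mass, so the surviving elements continue to carry $\Omega(|D|t)$ of the sum and their values of $r_{D+V}$ span only $O(\log|A|)$ dyadic levels. A second dyadic pigeonhole then produces a level $s \gtrsim |D|t/|A|$ and a set $C$ with $r_{D+V}(c) \in [s,2s)$ for every $c \in C$ and $|C|s \gtrsim |D|t/\log|A|$. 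Each $c \in C$ then inherits the claimed lower bound $r_{D+V}(c) \gg |D|t/|A|$.

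The main obstacle is arranging $B$ so that simultaneously $|A|\ll |B|$ and $|B| \ll (\log|A|)^{2}|C|$: the first can be secured by taking $B = A$ (perhaps after a harmless cleaning), while the second amounts to checking that the level $s$ produced by the second pigeonhole is no larger than $(\log|A|)\,|D|t/|A|$ up to constants. This balancing act is exactly what \cite[Proposition~1]{StWa} accomplishes in the symmetric setting, via an iterated refinement of $B$, $D$ and $t$; the argument never exploits any special relationship between the two sets making up the difference, so it ports over to the asymmetric mixed-energy version $\E(B,V)$ with only cosmetic changes, yielding all assertions of the lemma.
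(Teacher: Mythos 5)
Your proposal is correct and matches the paper's treatment: the paper offers no proof of this lemma beyond the remark that it ``may be deduced from \cite[Proposition~1]{StWa}'', and your two-stage dyadic pigeonholing together with the final appeal to that same proposition for the iterated refinement guaranteeing $|B|\ll(\log|A|)^{2}|C|$ is exactly the intended deduction. You also correctly isolate the one genuinely non-routine point --- that the pigeonhole steps alone do not bound $|C|$ from below in terms of $|B|$ --- and attribute it to the right source.
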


\begin{proposition}\label{prop:bwdss2}
Let $A, V, X\subseteq \F_p$, with $|V|, |X|\gg |A|$. Let $f \in \F_p[x,y]$ denote a non-degenerate quadratic polynomial. Then there exist subsets $C\subseteq B \subseteq A$ with $|C| \gtrsim |B| \gg |A|$ so that, if $|X||B|^2|V|^3/ \E(B, V)\ll p^2$, then 
\[
\E(B, V)^{3} \E_f(C, X)^2 \lesssim |A|^6|X|^3|V|^5.
\]
\end{proposition}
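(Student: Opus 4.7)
\textbf{Proof proposal for Proposition~\ref{prop:bwdss2}.}

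The plan is to follow the template of Proposition~\ref{prop:bwd3}, substituting the asymmetric regularisation Lemma~\ref{lem:regularization2} for Lemma~\ref{lem:EnergyEnergyPigeonh}. First I would apply Lemma~\ref{lem:regularization2} to the pair $(A,V)$ to extract $C\subseteq B\subseteq A$ with $|A|\ll |B|\ll (\log|A|)^2|C|$, a level $t$ and a set $D\subseteq B-V$ such that $r_{B-V}(d)\in[t,2t)$ for $d\in D$, $|D|t^2 \asymp \E(B,V)$ up to logs, and crucially $r_{D+V}(c)\gg |D|t/|A|$ for every $c\in C$.

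Next, I would leverage the rich representation of elements of $C$ as $c=d+v$ to bound $\E_f(C,X)$ by a larger $6$-variable count: by the Cauchy--Schwarz style argument of the proof of Proposition~\ref{prop:bwd3},
\[
\E_f(C,X)\;\lesssim\;\frac{|A|^2}{|D|^2 t^2}\,E,
\]
where $E$ counts sextuples $(d_1,d_2,v_1,v_2,x_1,x_2)\in D^2\times V^2\times X^2$ with $f(d_1+v_1,x_1)=f(d_2+v_2,x_2)$. Introducing $f^{'}(u,v,w):=f(u+v,w)$, Claim~\ref{claim:nondegbi23} guarantees that $f^{'}$ is not of the form $g(h(x)+k(y)+\ell(z))$, so Lemma~\ref{lem:KoMiPhSh3Pol} applies and yields
\[
E\;\ll\;(|D||V||X|)^{3/2}+\max\bigl\{|D|^2|V|^2,\;|D|^2|X|^2,\;|V|^2|X|^2\bigr\}.
\]

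Before Lemma~\ref{lem:KoMiPhSh3Pol} can be used, one has to check the characteristic constraint $|D||V||X|\ll p^2$. For this, I would combine the trivial bound $|D|t\le |B||V|$ with the dyadic lower bound $|D|t^2\gtrsim \E(B,V)/\log|A|$ to deduce
\[
|D|\;\lesssim\;\frac{|B|^2|V|^2\log|A|}{\E(B,V)},
\]
so the hypothesis $|X||B|^2|V|^3/\E(B,V)\ll p^2$ gives $|D||V||X|\ll p^2\log|A|$; the spurious log is then absorbed by replacing $C$ with a subset of comparable size, as in the proof of Proposition~\ref{prop:bwd3}.

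Once the main term dominates, substituting back gives
\[
\E_f(C,X)\;\lesssim\;\frac{|A|^2|V|^{3/2}|X|^{3/2}}{|D|^{1/2}t^2}\;\lesssim\;\frac{|A|^2|B||V|^{5/2}|X|^{3/2}}{\E(B,V)^{3/2}}\;\lesssim\;\frac{|A|^{3}|V|^{5/2}|X|^{3/2}}{\E(B,V)^{3/2}},
\]
using $|D|^{1/2}t^2\gtrsim \E(B,V)^{3/2}/(|B||V|)$ up to logs, and squaring yields the claimed inequality. The main obstacle I anticipate is showing that the $(|D||V||X|)^{3/2}$ term dominates the $\max$-term in Lemma~\ref{lem:KoMiPhSh3Pol}, exactly as in Cases~1 and~2 of Proposition~\ref{prop:bwd3}; here I expect to split according to which of $|D|^2|V|^2$, $|D|^2|X|^2$, $|V|^2|X|^2$ realises the maximum and use the asymptotic $|D|\gtrsim \E(B,V)/(|A|^2\log|A|)$ together with the size conditions $|V|,|X|\gg |A|$ and $|B|\ll |A|$ to verify $|D|^3\gtrsim |V||X|$, etc. If any sub-case resists, the alternative is that $\E(B,V)$ is small enough that the conclusion of the proposition follows directly from the trivial bound $\E_f(C,X)\le |A|^2|X|^2$, so a short separate case-split on $\E(B,V)$ will close the argument.
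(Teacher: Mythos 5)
Your skeleton coincides with the paper's proof: Lemma~\ref{lem:regularization2} applied to the pair $(A,V)$, the passage from $\E_f(C,X)$ to the six-variable count weighted by $|A|^2/(|D|t)^2$, Claim~\ref{claim:nondegbi23} feeding into Lemma~\ref{lem:KoMiPhSh3Pol}, and the identical justification of the $p$-constraint via $\E(B,V)|D|\lesssim (|D|t)^2\le |B|^2|V|^2$. Your computation in the regime where $(|D||V||X|)^{3/2}$ realises the maximum also matches the paper's Case~1 (there via $|D|t\le |B||V|$ and $|B|\le |A|$). A minor omission: Lemma~\ref{lem:KoMiPhSh3Pol} is stated for subsets of $\F_p^{\times}$, so $0$ must be removed from $D$; the paper checks this costs nothing.

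The genuine gap is in your treatment of the remaining maximands. Unlike Proposition~\ref{prop:bwd3}, this proposition assumes no lower bound on $\E(B,V)$ and no upper bound on $|V|$ or $|X|$ in terms of $|A|$, so the domination inequalities you hope to verify simply need not hold: for instance $|D|\le |B||V|$ while $|X|$ may be as large as $p$, so $|D|^3\gtrsim |V||X|$ can fail, and the lower bound $|D|\gtrsim \E(B,V)/(|A|^2\log|A|)$ gives nothing useful when $\E(B,V)$ is close to its trivial minimum. Your fallback also does not close the argument: the purely trivial estimate gives at best $\E(B,V)^3\E_f(C,X)^2\le (|B|^2|V|)^3(|C|^2|X|)^2\le |A|^{10}|V|^3|X|^2$, and comparing with the target $|A|^6|V|^5|X|^3$ requires $|A|^4\le |V|^2|X|$, which is not implied by $|V|,|X|\gg|A|$. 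What the paper actually does is keep all four maximands as separate cases and, in each, combine the case-defining inequality (e.g.\ $|D|^3\le |V||X|$ when $M=|V|^2|X|^2$) with $t\le |B|$ and $|D|t\le |B||V|$; in the cases $M=|V|^2|D|^2$ and $M=|X|^2|D|^2$ it uses a hybrid bound, estimating one factor of $\E_f(C,X)$ by the incidence output $t^2\E_f(C,X)\lesssim |A|^2 M/|D|^2$ and the other by the trivial $|C|^2|X|$, and only then invoking $|V|,|X|\gg|A|$ to absorb the surplus powers of $|A|$. Without this extra mechanism your argument does not go through in the secondary cases.
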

\begin{proof}

We first apply Lemma~\ref{lem:regularization2}, to get subsets $C\subseteq B \subseteq A$ and a set $D\subseteq B-V$ so that $r_{B-V}(d)\in [t,2t)$ for each $d\in D$ and $\E(B, V)\gtrsim |D|t^2$. Moreover, $r_{D+V}(c)\gg |D|t|A|^{-1}$ for all $c\in C$.

Without loss of generality, we may assume that $0\notin D$, removing it if necessary. Indeed, if $|D|\ll 1$, then $\E(B, V)\ll |B||V|$ and so we are done using the trivial bound $\E_f(C,X)\leq |C|^2|X|$. Otherwise, if $|D|\gg 1$, then $\E(B, V)\gg (|D|-1)t^2$, and so removing an element from $D$ is without consequence.

By Claim~\ref{claim:nondegbi23}, we see that the polynomial $\tilde f \in \mathbb{F}_p[x,y,z]$ defined by $\tilde f(x,y,z) = f(x+y,z)$ is not of the form $g(h(x) + k(y) + l(z))$. 

Let us now apply Lemma~\ref{lem:KoMiPhSh3Pol}, deferring the justification of the $p$-constraint until the end of the proof. We have
\begin{align*}
\E_f(C,X) &:= |\{(c_1,c_2,x_1,x_2)\in C^2 \times X^2 \colon f(c_1,x_1) = f(c_2,x_2)\}|\\
&\lesssim \frac{|A|^2}{|D|^2t^2}
|\{(v_1,v_2,d_1,d_2,x_1,x_2)\in V^2\times D^2 \times X^2 \colon f(v_1 + d_1,x_1) = f(v_2 + d_2,x_2)\}|\\
&\lesssim \frac{|A|^2}{|D|^2t^2}\max\left\{|D|^{3/2}|X|^{3/2}|V|^{3/2}, |V|^2|X|^2 , |V|^2|D|^2 ,|X|^2|D|^2 \right\}\\
&:= \frac{|A|^2}{|D|^2t^2}M\,.
\end{align*}

We split into cases according to the maximand in the above expression. We shall in particular make repeated use of the trivial bound $\E_f(C, X)\leq |C||X|\cdot\min\{|C|, |X|\}.$

{\bf Case 1: $M = |D|^{3/2}|X|^{3/2}|V|^{3/2}$.} Squaring and multiplying by $(|D|t)^2$ we have
\[
(|D|t^2)^3 \E_f(C, X)^2 \lesssim |A|^4|X|^3|V|^3 (|D|t)^2.
\]
Using $|D|t \leq |B||V|$ completes the proof in this case.

{\bf Case 2: $M = |V|^2 |X|^2$.} In particular, we have that $|D|^3 < |V||X|$, and so, using the trivial bound $t\leq |B|$, we have $\E(B, V)^3\E_f(C,X)^2 < |V||X| \cdot |B|^6 \cdot (|X||C|^2)^2 \leq |A|^{10}|X|^3|V|$.

{\bf Case 3: $M =|V|^2|D|^2$.} We have $|X|^3 <|V||D|$. Thus
\begin{align*}
    \E(B, V)^3\E_f(C, X)^2 &\leq (|D|t)^3\cdot t \cdot (t^2\E_f(C, X))\cdot (|C|^2|X|) \lesssim (|B||V|)^3\cdot |B|\cdot (|A|^2|V|^2)\cdot (|C|^2|X|)\\
    &\leq |A|^8|X||V|^5\ll |A|^6|X|^3|V|^5.
\end{align*}

{\bf Case 4: $M = |X|^2|D|^2$.} We get
\begin{align*}
|D|\E(B,V)^3\E_f(C,X)^2 & \sim (t|D|)^4 (t^2~\E_f(C,X)) \E_f(C,X) \lesssim |B|^4|V|^4 (|A|^2|X|^2) |C|^2|X| \\
&\ll |A|^7|X|^3 |V|^4 |B|\leq|A|^7|X|^3 |V|^4|D|\,. 
\end{align*}
Thus 
\[
\E(B,V)^3\E_f(C,X)^2 \lesssim |A|^7|X|^3 |V|^4\leq |A|^6|X|^3|V|^5\,. 
\]

Finally, let us justify the $p$-constraint necessary for our application of Lemma~\ref{lem:KoMiPhSh3Pol}. Note that by Lemma~\ref{lem:regularization2}, we have
\[
\E(B, V) |D| \lesssim (|D|t)^2\ll |B|^2|V|^2.
\]
Hence if $|X||B|^2|V|^3/ \E(B, V)\ll p^2$, our use of Lemma~\ref{lem:KoMiPhSh3Pol} is justified.
\end{proof}

\subsection{Fourth moment energy}
The following lemma is proved by Mirzaei \cite{Mirzaei}.
\begin{lemma}
\label{lem:MirE4}
Let $A, B\subset \F_p$, with $|A||B||A-B|\ll p^2$. For a non-degenerate, quadratic polynomial $f\in \F_p[x, y]$, we have
\[
\E_4(A, B)\ll \frac{|f(A,A)|^2|B|^3}{|A|}.
\]
\end{lemma}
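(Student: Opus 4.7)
The plan is to first reduce $\E_4(A,B)$ to an $f$-energy of translates of $B$ via a Cauchy--Schwarz step, and then apply the three-variable incidence bound Lemma~\ref{lem:KoMiPhSh3Pol}, whose non-degeneracy hypothesis will be guaranteed by Claim~\ref{claim:nondegbi23}. Note that the $p$-constraint $|A||B||A - B| \ll p^2$ neatly matches the $|U||V||W|\ll p^2$ requirement of that lemma, once one of the three sets is chosen inside $A - B$.

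First, I would write $\E_4(A, B) = \sum_{c \in A-B} |S_c|^4$, where $S_c := A \cap (B + c)$, and split $|S_c|^2 = \sum_\lambda T(c, \lambda)$ with $T(c, \lambda) = |\{(a_1, a_2) \in S_c^2 : f(a_1, a_2) = \lambda\}|$. Cauchy--Schwarz applied to this decomposition gives
\[
|S_c|^4 \leq |f(S_c,S_c)|\sum_\lambda T(c,\lambda)^2 \leq |f(A, A)| \cdot \E_{f}(S_c),
\]
so that $\E_4(A, B) \leq |f(A, A)| \cdot \sum_c \E_{f}(S_c)$. The problem thus reduces to establishing the incidence-theoretic bound $\sum_c \E_{f}(S_c) \ll |f(A, A)| |B|^3 / |A|$.

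Next, I would recognise $\sum_c \E_{f}(S_c)$ as the number of 5-tuples $(c, a_1, a_2, a_3, a_4)$ with $a_i - c \in B$ for all $i$ and $f(a_1, a_2) = f(a_3, a_4)$. After the substitution $b_i = a_i - c \in B$, this is an incidence count governed by an equation in the three-variable polynomial obtained from $f$ by inserting the shift by $c$. Claim~\ref{claim:nondegbi23} (applied to a shift of the form $f(x \pm z, y)$, as is done elsewhere in the paper) ensures that this auxiliary polynomial is not of the form $g(h(x)+k(y)+l(z))$, so Lemma~\ref{lem:KoMiPhSh3Pol} applies with suitable triples among $A$, $B$ and (the dyadic level set of) $A-B$. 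The $p$-constraint then becomes $|A||B||A-B| \ll p^2$, exactly as hypothesised. Finally, putting together the two pieces yields $\E_4(A,B) \ll |f(A,A)|^2 |B|^3/|A|$, where the extra factor $|A|^{-1}$ materialises from the identity $\sum_c |S_c| = |A||B|$ when passing from total count to a per-incidence estimate.

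The main obstacle I anticipate is the bookkeeping of the several maxima produced by Lemma~\ref{lem:KoMiPhSh3Pol}: for each possible dominant term in $\max\{|U|^2|V|^2, |U|^2|W|^2, |V|^2|W|^2\}$ one must verify that the non-leading contribution is absorbed by the leading $(|U||V||W|)^{3/2}$ term, typically via the $p$-constraint or trivial bounds of the form $|S_c|\leq \min(|A|,|B|)$ and $|A-B| \leq |A||B|$. A secondary subtle point is engineering the change of variables so that the resulting auxiliary three-variable polynomial genuinely falls under Claim~\ref{claim:nondegbi23}; the occurrence of $f'(x,y,z) = f(x\pm z,y)$ earlier in the excerpt suggests precisely the right substitution to use here.
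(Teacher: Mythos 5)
The paper does not actually prove this lemma --- it is imported verbatim from Mirzaei \cite{Mirzaei} --- so there is no in-paper argument to compare against; judged on its own terms, your plan has a genuine gap at its central step. Your opening Cauchy--Schwarz, $|S_c|^4\leq |f(A,A)|\,\E_f(S_c)$, is correct, but it reduces the problem to bounding $\sum_c\E_f(S_c)$, a count in which the \emph{same} shift $c$ appears on both sides of the equation $f(b_1+c,b_2+c)=f(b_3+c,b_4+c)$. Lemma~\ref{lem:KoMiPhSh3Pol} only counts solutions of $f'(u_1,v_1,w_1)=f'(u_2,v_2,w_2)$ with six independent variables, three per side, so it does not apply; and if you decouple the two copies of $c$ to force it into that shape, you overcount by roughly $|A-B|$ and the main term becomes $(|B|\cdot|B|\cdot|A-B|)^{3/2}\geq |B|^3|A|^{3/2}$, which always exceeds your target $|f(A,A)||B|^3/|A|\leq |A||B|^3$. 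There is also a structural obstruction: that target still carries a factor of $|f(A,A)|$, which no purely size-based incidence bound can produce. A single Cauchy--Schwarz applied directly to the fourth moment can only ever extract one of the two factors of $|f(A,A)|$.

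The mechanism that does work (and is how Mirzaei argues) is to apply Cauchy--Schwarz at the level of a \emph{threefold} count over superlevel sets, so that the resulting energy has independent variables on each side and the exponent bookkeeping squares the $|f(A,A)|$ for you. Set $P_\Delta=\{c: r_{A-B}(c)\geq\Delta\}$; then $\Delta|P_\Delta||A|\leq |\{(c,b,a')\in P_\Delta\times B\times A: b+c\in A\}|$, and since $f(b+c,a')\in f(A,A)$ on this set, Cauchy--Schwarz over the value of $f$ gives
\begin{equation*}
(\Delta|P_\Delta||A|)^2\leq |f(A,A)|\cdot|\{(c_1,b_1,a_1,c_2,b_2,a_2)\in (P_\Delta\times B\times A)^2: f(b_1+c_1,a_1)=f(b_2+c_2,a_2)\}|,
\end{equation*}
after dropping the constraints $b_i+c_i\in A$. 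This last count is exactly $\E_{f'}$ for $f'(x,y,z)=f(x+z,y)$, which Claim~\ref{claim:nondegbi23} certifies as admissible, and $|B||A||P_\Delta|\leq |A||B||A-B|\ll p^2$ justifies Lemma~\ref{lem:KoMiPhSh3Pol}. When the leading term $(|A||B||P_\Delta|)^{3/2}$ dominates, $|P_\Delta|$ survives only to the power $1/2$ after cancellation, so solving for it squares the $|f(A,A)|$ and yields $\Delta^4|P_\Delta|\ll |f(A,A)|^2|B|^3/|A|$; summing $\E_4(A,B)=\sum_c r_{A-B}(c)^4$ over dyadic ranges of $\Delta$ (and disposing of the secondary maxima, which is the bookkeeping you correctly anticipated) gives the lemma. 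So your toolbox is right, but the Cauchy--Schwarz must be applied before, not after, raising to the fourth power.
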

We extract the following from the arguments of Rudnev, Shakan and Shkredov \cite[Equations~3.2 and 3.3]{RudShaShk}.
\begin{lemma}\label{lem:RSSDC}
Let $A\subset \F_p$, then
\[
|A|^{24}\leq \E_4(A)^2|A-A|^{5}\Delta^4\E_4(A,D),
\]
for some $\Delta\geq 1$ and $D\subset A- (A-A)$ such that $r_{A-(A-A)}(d)\approx \Delta$ for all $d\in D$.
\end{lemma}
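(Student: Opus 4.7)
The plan is to combine a dyadic pigeonhole on the representation function $r_{A-(A-A)}$ with a chain of H\"older inequalities relating moments of $r_{A-A}$ and $r_{A-D}$, following the argument of Rudnev, Shakan and Shkredov indicated in the statement.

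The starting point is the identity $\sum_d r_{A-(A-A)}(d) = |A|\cdot|A-A|$, with $d$ ranging over $A-(A-A)$. A dyadic decomposition of the level sets of $r_{A-(A-A)}$, followed by a pigeonhole (cf.\ Lemma~\ref{lem:EnergyEnergyPigeonh}(i)), produces $\Delta\geq 1$ and a set $D\subseteq A-(A-A)$ with $r_{A-(A-A)}(d)\in[\Delta,2\Delta)$ for every $d\in D$ and $|D|\Delta \gtrsim |A|\cdot|A-A|$. This produces the $\Delta$ and $D$ claimed in the statement; the logarithmic loss from the pigeonhole is absorbed into the $\approx$.

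The main inequality is then obtained by introducing a bridge sum of the form
\[
T_k = \sum_x r_{A-A}(x)^k\, r_{A-D}(x)
\]
for a suitable integer $k$, and estimating it from above and below. For each $d\in D$, the trivial bound $\sum_{a\in A} r_{A-A}(a-d)\geq r_{A-(A-A)}(d)\geq \Delta$ together with H\"older (power mean) applied to the at most $r_{A-(A-A)}(d)\lesssim\Delta$ nonzero terms gives $\sum_{a\in A} r_{A-A}(a-d)^k \gtrsim \Delta$; summing over $d\in D$ yields $T_k \gtrsim |D|\,\Delta \gtrsim |A|\cdot|A-A|$. Applying H\"older to $T_k$ in the complementary direction produces an upper bound of the shape $\E_4(A)^{\alpha}\E_4(A,D)^{\beta}$. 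These two bounds are combined with the classical moment inequalities $|A|^{2m}\leq |A-A|^{m-1}\sum_x r_{A-A}(x)^m$ (H\"older) and $\E(A)^3\leq |A-A|^2\,\E_4(A)$ (Cauchy-Schwarz), together with the substitution $|D|\,\Delta\approx |A|\cdot|A-A|$, to collect the right-hand side in the target form $\E_4(A)^2|A-A|^5\Delta^4\E_4(A,D)$.

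The main obstacle is matching the very particular exponents $(2,5,4,1)$ on the right-hand side: this demands a careful choice of $k$ in the bridge sum and meticulous bookkeeping across the chained H\"older applications. The factor $|A-A|^5$ arises from repeated use of $|A|^{2m}\leq |A-A|^{m-1}\sum_x r_{A-A}(x)^m$, while $\Delta^4$ and the two $\E_4$-factors come from the bridge sum balanced against the moment comparisons. Since the lemma is explicitly extracted from Equations~3.2 and 3.3 of \cite{RudShaShk}, the most efficient route is to verify that the calculation there transfers verbatim to the present setting --- which it should, as no field-specific input (incidence or character estimate) enters.
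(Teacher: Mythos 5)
The paper offers no proof of this lemma: it is imported wholesale from \cite[Equations~3.2 and 3.3]{RudShaShk}, so there is no internal argument to compare against, and your closing suggestion to ``verify that the calculation there transfers verbatim'' is in effect what the paper itself does. The difficulty is that the sketch you wrap around that citation is not an independent derivation, and its quantitative core would fail. The stated inequality is \emph{tight} when $A$ is an arithmetic progression of size $N$: there $\E_4(A)\approx N^5$, $|A-A|\approx N$, $\Delta\approx N$, $\E_4(A,D)\approx N^5$, and both sides equal $N^{24}$ up to constants. Your bridge sum is lower-bounded by counting each nonzero summand as $\gtrsim 1$, giving $T_k\gtrsim |D|\Delta\approx|A|\,|A-A|\approx N^2$, whereas the matching H\"older upper bound (for instance $T_3\leq \E_4(A)^{3/4}\E_4(A,D)^{1/4}\approx N^5$) is attained by the progression. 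You therefore surrender a multiplicative factor of $N^3$ at the unique point where $\E_4(A,D)$ enters, and no subsequent multiplication by further correct inequalities can recover it; an AP-sharp conclusion is unreachable by this route for any choice of $k$. The missing idea is that the popularity of $D$ must be exploited through Cauchy--Schwarz, e.g.\ $\sum_{a\in A}r_{A-A}(a-d)^2\geq \big(\sum_{a\in A}r_{A-A}(a-d)\big)^2/r_{A-(A-A)}(d)$, so that the numerator is the (large) triple representation count rather than merely $\Delta$; this is what drives \cite[Equations~3.2 and 3.3]{RudShaShk}.

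Two further defects. The auxiliary inequality $\E(A)^3\leq|A-A|^2\,\E_4(A)$ you invoke is false: for an arithmetic progression the left side is $\approx N^9$ and the right side is $\approx N^7$ (the correct Cauchy--Schwarz statement is $\E(A)^2\leq|A-A|\,\E_4(A)$). And fixing $D$ by the first-moment pigeonhole $|D|\Delta\gtrsim|A|\,|A-A|$ is not justified: for a Sidon set this forces $\Delta\approx1$ and $|D|\approx|A|^3$, for which the trivial bounds give only $\E_4(A,D)\gtrsim|A|^4$ while the lemma would require $\Delta^4\E_4(A,D)\gg|A|^6$. The level set in \cite{RudShaShk} is selected by a pigeonhole embedded in the Cauchy--Schwarz chain itself, weighted towards large $\Delta$; note that the present paper only uses the \emph{upper} bounds $|D|\Delta\leq|A|\,|A-A|$ and $|D|\Delta^2\leq\E(A,A-A)$ downstream, so any dyadic level set for which the displayed inequality holds is acceptable --- but you must actually exhibit one.
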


\subsection{Concluding the proof of Theorem~\ref{thm:MirImp}}
We recall a Pl\"unnecke-Ruzsa type inequality appearing in \cite{Pet12}.
\begin{lemma}\label{lem:PRI1}
Given finite, non-empty sets $A$ and $B$ in an abelian group, we have
\[
|kA - lA| \leq \frac{|A+B|^{k+l}}{|B|^{k+l-1}},
\]
where $kA$ is used to denote the $k$-fold sum set of $A$.
\end{lemma}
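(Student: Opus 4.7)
The plan is to follow the approach of Petridis \cite{Pet12}, who proves Plünnecke--Ruzsa-type inequalities via a clever choice of subset of $B$. Setting $K := |A+B|/|B|$, the claimed bound is equivalent to $|kA - lA| \leq K^{k+l}|B|$, so I would aim directly for this form.

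First I would pick a non-empty subset $X \subseteq B$ that minimizes the ratio $K_0 := |X+A|/|X|$ over all non-empty subsets of $B$; since $B$ itself is admissible, $K_0 \leq K$ automatically. The heart of the argument is Petridis's key inequality: for every finite set $C$,
\[
|X + A + C| \;\leq\; K_0 \cdot |X + C|.
\]
This I would prove by induction on $|C|$. The base case $|C|=1$ is just the definition of $K_0$. For the inductive step, write $C = C' \sqcup \{\gamma\}$ and introduce the ``already-covered'' set $Z := \{x \in X : x + \gamma \in X + C'\}$, so that $|X+C| = |X+C'| + |X \setminus Z|$. A short set-theoretic check gives the inclusion
\[
X + A + C \;\subseteq\; (X + A + C') \;\cup\; \bigl((X \setminus Z) + A + \gamma\bigr),
\]
and combining the inductive hypothesis for $C'$ with the minimality of $K_0$ applied to the subset $X \setminus Z \subseteq B$ closes the induction.

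Once the key inequality is established, iterating it with $C = \{0\}, A, 2A, \ldots, (k-1)A$ successively yields
\[
|X + kA| \;\leq\; K_0^{k} \cdot |X|
\]
for every $k \geq 0$. Finally, I would invoke Ruzsa's triangle inequality $|X| \cdot |Y - Z| \leq |X+Y| \cdot |X+Z|$ (proved by fixing, for each $d \in Y-Z$, a representation $y_d - z_d = d$ and checking that $(x,d) \mapsto (x+y_d, x+z_d)$ is injective on $X \times (Y-Z)$) with $Y = kA$ and $Z = lA$, yielding
\[
|X| \cdot |kA - lA| \;\leq\; |X + kA| \cdot |X + lA| \;\leq\; K_0^{k+l} |X|^2.
\]
Dividing by $|X|$ and using $K_0 \leq K$ together with $|X| \leq |B|$ produces
\[
|kA - lA| \;\leq\; K_0^{k+l}|X| \;\leq\; K^{k+l}|B| \;=\; \frac{|A+B|^{k+l}}{|B|^{k+l-1}},
\]
as required.

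The main obstacle is the inductive step in the Petridis inequality: the minimality of $K_0$ supplies control on $|Y+A|/|Y|$ in the ``wrong'' direction for a naive bound on the new contribution $|(X \setminus Z) + A|$, and Petridis's innovation is precisely the choice of decomposition displayed above, which pairs this new contribution with the new contribution $|X \setminus Z|$ to $|X+C|$ in exactly the way that the minimality can be exploited. The subsequent iteration and appeal to Ruzsa's triangle inequality are standard.
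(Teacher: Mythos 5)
The paper does not prove this lemma itself; it simply cites Petridis \cite{Pet12}, so your task was to reconstruct that argument, and your overall architecture (a minimising subset $X\subseteq B$, the key inequality $|X+A+C|\le K_0|X+C|$, iteration, then Ruzsa's triangle inequality) is exactly right. However, the inductive step as you have written it does not close. Your inclusion
\[
X+A+C\subseteq (X+A+C')\cup\bigl((X\setminus Z)+A+\gamma\bigr)
\]
is correct, but it leaves you needing the \emph{upper} bound $|(X\setminus Z)+A|\le K_0|X\setminus Z|$, whereas the minimality of $K_0$ only supplies the \emph{lower} bound $|(X\setminus Z)+A|\ge K_0|X\setminus Z|$ --- this is the very direction problem you flag in your last paragraph, and the decomposition you display does not by itself resolve it. Petridis's actual fix is to shrink the second set in the union: one checks that
\[
X+A+C\subseteq (X+A+C')\cup\Bigl(\bigl((X+A)\setminus(Z+A)\bigr)+\gamma\Bigr),
\]
since any element $x+a+\gamma$ with $x+a\in Z+A$ already lies in $X+A+C'$ (rewrite $x+a=z+a'$ with $z\in Z$ and use $z+\gamma\in X+C'$). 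The new contribution is then measured as $|X+A|-|Z+A|$, and here the minimality is used in the correct direction: $|X+A|=K_0|X|$ by the choice of $X$, while $|Z+A|\ge K_0|Z|$ since $Z\subseteq B$, giving $|X+A|-|Z+A|\le K_0(|X|-|Z|)=K_0|X\setminus Z|$. Combined with the inductive hypothesis and your identity $|X+C|=|X+C'|+|X\setminus Z|$, this closes the induction; the remainder of your argument (iterating to $|X+kA|\le K_0^{k}|X|$ and applying the Ruzsa triangle inequality) is correct as written.
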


Let us now prove Theorem~\ref{thm:MirImp}, deferring justifications of $p$-constraints until after we have obtained the result. 

By Proposition~\ref{prop:bwdss2}, there exist $C\subseteq B\subseteq A$, with $|C|\gtrsim |B|\gg |A|$ such that
\begin{equation}\label{eqn:UBEMI}
\E(B, A-A)\lesssim |A|^{1/3}|f(C, C)|^{2/3}|A-A|^{5/3}.
\end{equation}

Next, applying Lemma~\ref{lem:MirE4} to Lemma~\ref{lem:RSSDC}, under the condition $|A||A+A-A||A+A-A-A|\ll p^{2}$, we obtain
\begin{align*}
    |A|^{25}\ll& \E_4(B)^2|B-B|^{5}|f(B, B)|^2(|D|\Delta)^2 (|D|\Delta^2)\\
    &\ll \E_4(B)^2|B-B|^{5}|f(B, B)|^2(|B||B-B|)^2 (\E(B, B-B)).
\end{align*}

A second application of Lemma~\ref{lem:MirE4} gives
\begin{equation}
\label{eqn:RSSDC1}
|A|^{19} \lesssim |f(A, A)|^6|A-A|^7 \E(B, A-A).
\end{equation}
Applying \eqref{eqn:UBEMI} to this gives the result. 

Let us now justify the $p$-constraints. First, to justify the application of Proposition~\ref{prop:bwdss2}, we note that if the relevant $p$-constraint fails, we get
\[
\frac{|A-A|^3|B|^2|C|}{\E(B, A-A)}>p^2 >|A|^{104/23}.
\]
Then using
\[
\E(B, A-A)\geq \frac{|B|^2|A-A|^2}{|B+A-A|}\geq  \frac{|B|^2|A-A|^2}{|A+A-A|}\gg\frac{|A|^4}{|A-A|},
\]
we see the required bound holds in this case.

To check the required $p$-constraint for the applications of  Lemma~\ref{lem:MirE4}, note that Lemma~\ref{lem:PRI1} implies
\[
|A||A+A-A||A+A-A-A|\leq \frac{|A-A|^{7}}{|A|^4}.
\]
Hence, if the condition of Lemma~\ref{lem:MirE4} (as applied to $\E_4(B, D))$ fails, we have $|A-A|\gg |A|^{28/23}$, which gives the required result. A similar analysis is necessary for the application of Lemma~\ref{lem:MirE4} to $\E_4(B)$; the ensuing required $p$-constraint is more forgiving and is already satisfied.
\section{Proof of Theorems~\ref{thm:BCS1}, \ref{thm:BCS2} and \ref{thm:LWP}}
We state some auxiliary exponential sum estimates, derived from basic applications of H\"older's inequality. See also \cite[Equation 3.7]{KonShp}.
\begin{lemma}
Let $X, Y\subseteq \F_q$. We have
\begin{equation}\label{eqn:BCSHLS}
    \left|\sum_{x\in X}\sum_{y\in Y}\psi(xy) \right|^4 \leq q |X|^3\E(Y).
\end{equation}
and
\begin{equation}\label{eqn:BCSHSS}
    \left|\sum_{x\in X}\sum_{y\in Y}\psi(xy) \right|^8 \leq q|X|^4|Y|^4\E(X)\E(Y).
\end{equation}
\end{lemma}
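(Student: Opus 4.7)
The plan is to derive both inequalities from a direct application of H\"older's inequality combined with the orthogonality of additive characters, as the lemma's preamble already suggests. Writing $S_x := \sum_{y \in Y}\psi(xy)$ so that the double sum becomes $T := \sum_{x \in X}S_x$, the central identity to establish is
\[
\sum_{x \in \F_q}|S_x|^4 = q\,\E(Y),
\]
which follows upon expanding $|S_x|^4 = S_x\overline{S_x}S_x\overline{S_x}$ via $\overline{\psi(xy)} = \psi(-xy)$, interchanging the order of summation, and invoking the orthogonality relation $\sum_{x\in\F_q}\psi(xa) = q\mathbf{1}_{a=0}$. The surviving quadruples $(y_1,y_2,y_3,y_4) \in Y^4$ are exactly those with $y_1 - y_2 + y_3 - y_4 = 0$, which number $\E(Y)$.

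For \eqref{eqn:BCSHLS}, I would apply H\"older's inequality to the one-variable sum $T = \sum_{x\in X}S_x\cdot 1$ with exponents $4$ and $4/3$, yielding $|T|^4 \le |X|^3\sum_{x\in X}|S_x|^4$. Majorising the inner sum by extending the range from $X$ to $\F_q$ and invoking the identity above then produces the desired bound.

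For \eqref{eqn:BCSHSS}, I would exploit the symmetry $\psi(xy)=\psi(yx)$ and apply the same argument with the roles of $X$ and $Y$ interchanged, producing $|T|^4 \le q|Y|^3\E(X)$. Multiplying this with \eqref{eqn:BCSHLS} yields
\[
|T|^8 \le q^2|X|^3|Y|^3\,\E(X)\E(Y),
\]
from which the stated form of \eqref{eqn:BCSHSS} follows in the natural regime $q \le |X||Y|$ in which the estimate is of interest (and is indeed slightly sharper outside that regime).

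I do not anticipate a genuine obstacle: the only substantive ingredient is the character orthogonality identity, which is standard, and the main care required is in tracking the H\"older exponents and the complex-conjugate bookkeeping when expanding $|S_x|^4$.
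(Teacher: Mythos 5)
Your derivation of \eqref{eqn:BCSHLS} is correct and is the standard argument: the orthogonality identity $\sum_{x\in\F_q}\bigl|\sum_{y\in Y}\psi(xy)\bigr|^4=q\E(Y)$ followed by H\"older over $x\in X$ with exponents $(4,4/3)$ gives exactly $q|X|^3\E(Y)$. The paper gives no proof of this lemma (it defers to \cite{KonShp}), so for the first estimate there is nothing further to compare.

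For \eqref{eqn:BCSHSS}, however, there is a genuine gap. Multiplying $|T|^4\le q|X|^3\E(Y)$ with its mirror image $|T|^4\le q|Y|^3\E(X)$ gives $|T|^8\le q^2|X|^3|Y|^3\E(X)\E(Y)$, which implies the stated bound $q|X|^4|Y|^4\E(X)\E(Y)$ only under the additional hypothesis $q\le|X||Y|$. Your parenthetical claim that your bound is sharper outside that regime is backwards: if $q>|X||Y|$ then $q^2|X|^3|Y|^3>q|X|^4|Y|^4$, so your bound is strictly \emph{weaker} there and does not yield the lemma. The lemma is stated for arbitrary $X,Y\subseteq\F_q$ and is applied (e.g.\ in Theorem~\ref{thm:BCS2}) in ranges where $|U||V|$ may well fall below $p$, so the missing case is not vacuous. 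The unconditional proof interleaves the two energies instead of multiplying two one-sided bounds. Writing $\widehat{X}(d)=\sum_{x\in X}\psi(xd)$, Cauchy--Schwarz over $x$ gives
\[
|T|^2\le |X|\sum_{x\in X}\Bigl|\sum_{y\in Y}\psi(xy)\Bigr|^2\le |X|\sum_{d\in\F_q}r_{Y-Y}(d)\,\bigl|\widehat{X}(d)\bigr|.
\]
Splitting $r_{Y-Y}=r_{Y-Y}^{1/2}\cdot r_{Y-Y}^{1/2}$ and applying Cauchy--Schwarz again,
\[
\sum_{d}r_{Y-Y}(d)\bigl|\widehat{X}(d)\bigr|\le \Bigl(\sum_{d}r_{Y-Y}(d)\Bigr)^{1/2}\Bigl(\sum_{d}r_{Y-Y}(d)\bigl|\widehat{X}(d)\bigr|^2\Bigr)^{1/2}=|Y|\Bigl(\sum_{d}r_{Y-Y}(d)\bigl|\widehat{X}(d)\bigr|^2\Bigr)^{1/2},
\]
and a final Cauchy--Schwarz bounds the inner sum by $\E(Y)^{1/2}\bigl(\sum_{d}|\widehat{X}(d)|^4\bigr)^{1/2}=\E(Y)^{1/2}\bigl(q\E(X)\bigr)^{1/2}$, using your orthogonality identity with the roles of $X$ and $Y$ swapped. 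Altogether $|T|^2\le |X||Y|\,q^{1/4}\E(X)^{1/4}\E(Y)^{1/4}$, and raising to the fourth power gives \eqref{eqn:BCSHSS} with no constraint relating $q$ to $|X||Y|$.
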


In addition, we will use the following result of Konyagin and Shkredov \cite[Lemma~4]{KonShk15}.
\begin{lemma}\label{lem:KonShkWi}
Let $X_1\subseteq X\subseteq \F_q$. Then
\[
\frac{1}{q}\sum_{y\in\F_q}\left|\sum_{x\in X}\psi(xy)\right|\geq \frac{|X_1|^2}{|X|^{1/2}\E(X_1)^{1/2}}.
\]
\end{lemma}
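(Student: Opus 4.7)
The plan is to prove the stated lower bound by starting from a simple orthogonality identity and then applying the Cauchy-Schwarz inequality twice, each time in the weighted form that extracts the desired quantities.

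First, I would set $S(y) = \sum_{x\in X}\psi(xy)$ and $T(y) = \sum_{x_1\in X_1}\psi(x_1 y)$, so that the goal becomes a lower bound on $\sum_{y\in\F_q}|S(y)|$. Using the orthogonality relation $\sum_{y\in\F_q}\psi(ay) = q\cdot \mathbf{1}[a=0]$ and the hypothesis $X_1\subseteq X$, one obtains the equality
\[
\sum_{y\in \F_q} S(y)\overline{T(y)} = q\cdot |\{(x,x_1)\in X\times X_1 : x=x_1\}| = q|X_1|,
\]
which by the triangle inequality gives the starting bound $q|X_1|\leq \sum_{y}|S(y)||T(y)|$.

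Next, I would apply Cauchy-Schwarz with the split $|S(y)||T(y)| = |S(y)|^{1/2}\cdot \bigl(|S(y)|^{1/2}|T(y)|\bigr)$ to get
\[
q^2|X_1|^2 \leq \Bigl(\sum_{y}|S(y)|\Bigr)\Bigl(\sum_{y}|S(y)||T(y)|^2\Bigr).
\]
Then a second application of Cauchy-Schwarz to $\sum_y |S(y)|\cdot |T(y)|^2$ yields
\[
\Bigl(\sum_{y}|S(y)||T(y)|^2\Bigr)^2 \leq \Bigl(\sum_{y}|S(y)|^2\Bigr)\Bigl(\sum_{y}|T(y)|^4\Bigr).
\]
The two quantities on the right are handled by standard Parseval-type identities: expanding and applying orthogonality gives $\sum_{y}|S(y)|^2 = q|X|$, and expanding $|T(y)|^4$ as $\sum_d r_{X_1-X_1}(d)\psi(dy)$ summed against its conjugate gives $\sum_{y}|T(y)|^4 = q\,\E(X_1)$. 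Substituting these back yields $\sum_y |S(y)||T(y)|^2 \leq q\sqrt{|X|\,\E(X_1)}$, which combined with the previous display and division by $q$ produces the claimed inequality.

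There is no serious obstacle here; the only point requiring care is selecting the precise form of each Cauchy-Schwarz step so that the denominator comes out as $|X|^{1/2}\E(X_1)^{1/2}$ rather than, say, $\E(X)^{1/2}\E(X_1)^{1/2}$. The right choice is to keep $|S(y)|^2$ (which Parsevals to $q|X|$) rather than $|S(y)|^4$, and to pair it against the fourth moment $|T(y)|^4$ (which gives $q\,\E(X_1)$). Everything else is routine manipulation.
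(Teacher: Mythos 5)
Your proof is correct, and it is the standard argument: the paper itself states this lemma as a quotation of Konyagin--Shkredov \cite[Lemma~4]{KonShk15} without reproducing a proof, and the double Cauchy--Schwarz chain you give (splitting $|S||T|=|S|^{1/2}\cdot|S|^{1/2}|T|$, then Parseval for $\sum_y|S(y)|^2=q|X|$ and $\sum_y|T(y)|^4=q\,\E(X_1)$) is precisely how that lemma is obtained. No gaps.
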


Let us record corollaries of Theorems~\ref{thm:bwdss} and \ref{thm:LEDRNSWI}, obtained through the same scheme as \cite[Lemma~4]{SwaWin}.
\begin{lemma}
\label{lem:SwaWinEB}
Let $f\in \F_p\left[x\right]$ denote a quadratic polynomial and let $T\subseteq \F_p$, with $|T|\leq p^{5/8}$ and the property that $f(T)\subseteq T$. Then there exists a subset $U\subseteq T$, with $|U| \gg |T|$ such that $\E(U)\lesssim |T|^{3-1/5}$.
\end{lemma}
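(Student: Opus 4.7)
The plan is to apply Theorem~\ref{thm:bwdss} directly to $T$ using the bivariate polynomial $F(x,y):=f(x)+f(y)$ built from the given univariate quadratic $f$. First I verify that $F$ is non-degenerate in the sense of Definition~\ref{def:DegPoly}: writing $f(x)=ax^2+bx+c$ with $a\neq 0$, one has $F(x,y)=a(x^2+y^2)+b(x+y)+2c$, which depends on both variables and has nonzero coefficients for $x^2$ and $y^2$ but no $xy$ term. No expression of the form $g(\alpha x+\beta y)$ can match this: if $\alpha\beta\neq 0$, then the quadratic part of $g$ forces a nonzero $xy$ coefficient, while if $\alpha\beta=0$ then one of the coefficients of $x^2$ or $y^2$ vanishes. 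Hence $F$ is a non-degenerate quadratic polynomial.

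Since $|T|\leq p^{5/8}$, Theorem~\ref{thm:bwdss} applied with $A=T$ and polynomial $F$ produces a decomposition $T=T_1\sqcup T_2$ with
\[
\max\{\E(T_1),\,\E_F(T_2)\}\lesssim |T|^{3-1/5}.
\]
At least one of $T_1, T_2$ has size $\geq |T|/2$. In the easy case $|T_1|\geq |T|/2$, I set $U=T_1$ and conclude immediately.

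The interesting case is $|T_2|\geq |T|/2$, where I invoke the hypothesis $f(T)\subseteq T$. Set $V:=f(T_2)$. Since $f$ has degree two, the map $f$ is at most $2$-to-$1$ on $T_2$, so $|V|\geq |T_2|/2\geq |T|/4$. The invariance gives $V\subseteq f(T)\subseteq T$. Moreover, every quadruple $(v_1,v_2,v_3,v_4)\in V^4$ with $v_1+v_2=v_3+v_4$ lifts, by choosing arbitrary preimages $t_i\in T_2\cap f^{-1}(v_i)$, to a quadruple $(t_1,t_2,t_3,t_4)\in T_2^4$ with $F(t_1,t_2)=F(t_3,t_4)$; distinct source quadruples yield distinct lifts, whence
\[
\E(V)\leq \E_F(T_2)\lesssim |T|^{3-1/5}.
\]
Taking $U=V$ completes the argument.

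Beyond the (mild) non-degeneracy check, this is a clean transfer from the bivariate decomposition of Theorem~\ref{thm:bwdss} to the desired univariate energy bound, with the invariance $f(T)\subseteq T$ used precisely to ensure the output set $U$ remains inside $T$. The main conceptual point is the choice of $F$: using $F(x,y)=f(x)+f(y)$ is what converts an $\E_F$-bound on $T_2$ into an ordinary additive-energy bound on the image $f(T_2)$, which thanks to the invariance is automatically a subset of $T$.
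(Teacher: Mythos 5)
Your proof is correct and takes essentially the same route as the paper: apply Theorem~\ref{thm:bwdss} to $T$ with the non-degenerate quadratic $F(x,y)=f(x)+f(y)$, take $U$ to be either the additively good part or the image under $f$ of the other part, and use $f(T)\subseteq T$ together with $\E(f(T_2))\leq \E_F(T_2)$ and the at-most-$2$-to-$1$ property of $f$. You simply make explicit the non-degeneracy check and the energy comparison that the paper's shorter proof leaves implicit.
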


\begin{proof}
Under the assumption $|T|\leq p^{5/8}$, Theorem~\ref{thm:bwdss} implies that $T = B\sqcup C$ such that 
\[
\max\{\E(B), \E(f(C))\} \lesssim |T|^{3-1/5}.
\]
Now, either $|B|\approx |T|$ or $|C|\approx |T|$. If the former is true, we may take $U = B$. If the latter is true, we take $U=f(C)$. Then $f(C)\subseteq f(T)\subseteq T$ by our assumption and clearly $|f(C)|\approx |T|$ as required.
\end{proof}

We also require the following analogue of the above, based on Theorem~\ref{thm:LEDRNSWI}. The proof is essentially the same as that of \cite[Lemma~4]{SwaWin}.
\begin{lemma}
\label{lem:SwaWinEBL}
Let $f\in \F_q(x)$ denote a rational function of degree $d$, satisfying \eqref{eqn:RNSHWINDC} and let $T\subseteq \F_q$ with the property that $f(T)\subseteq T$. Then there exists a subset $U\subseteq T$, with $|U| \gg |T|/((d+1)(\log|T|)^2)$ such that $\E(U)\lesssim |T|M_T^{-1}$, where $M_T$ is given by \eqref{eqn:MALS}.
\end{lemma}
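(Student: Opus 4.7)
The plan is to apply Theorem~\ref{thm:LEDRNSWI} directly with the role of $A$ played by $T$, then split into cases based on which of the two energies dominates, in the same spirit as the proof of Lemma~\ref{lem:SwaWinEB}. Specifically, Theorem~\ref{thm:LEDRNSWI} supplies subsets $C\subseteq B\subseteq T$ with $|T|\ll |B|\ll (\log|T|)^2|C|$ satisfying
\[
\E(B)\,\E(f(C))\ll \frac{|T|^7\log|T|}{q} + |T|^4(\log|T|)^2 q.
\]
Writing $K$ for the right-hand side, AM--GM guarantees that at least one of $\E(B)$ and $\E(f(C))$ is $\lesssim \sqrt{K}$, and a direct computation gives
\[
\sqrt{K}\ll \frac{|T|^{7/2}(\log|T|)^{1/2}}{q^{1/2}} + |T|^{2}(\log|T|)\,q^{1/2} \ll \frac{|T|^{3}}{M_T},
\]
recognising the two summands as $|T|^3$ divided by each of the two arguments of the minimum defining $M_T$ in \eqref{eqn:MALS}.

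Next I would carry out the case analysis, choosing $U$ according to which energy is smaller. If $\E(B)\lesssim |T|^{3}/M_T$, I take $U=B$; the size bound $|U|\gg |T|$ follows immediately from Theorem~\ref{thm:LEDRNSWI} and is stronger than what the lemma requires. Otherwise $\E(f(C))\lesssim |T|^{3}/M_T$, in which case I take $U=f(C)$. The invariance hypothesis $f(T)\subseteq T$ gives $f(C)\subseteq f(T)\subseteq T$, so $U\subseteq T$. Moreover, writing $f=g/h$ with $\max(\deg g,\deg h)=d$, each fibre $f^{-1}(y)$ consists of roots of $g(x)-y\,h(x)$ and so has size at most $d$, whence
\[
|f(C)|\geq \frac{|C|}{d}\gg \frac{|B|}{d(\log|T|)^2}\gg \frac{|T|}{(d+1)(\log|T|)^2},
\]
yielding the required lower bound on $|U|$.

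No substantial obstacle is expected: the argument is a direct transplant of the scheme used for Lemma~\ref{lem:SwaWinEB}, with Theorem~\ref{thm:LEDRNSWI} in place of Theorem~\ref{thm:bwdss}. The only two points needing attention are the AM--GM step, which converts the product bound on $\E(B)\E(f(C))$ into a $\max$-style bound, and the degree-based fibre bound for $f$, which accounts for the loss of a factor of $d+1$ in the size of $U$ relative to the case considered in Lemma~\ref{lem:SwaWinEB}.
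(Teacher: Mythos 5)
Your argument is correct and is essentially the paper's own proof: the paper likewise applies Theorem~\ref{thm:LEDRNSWI} to $T$, observes that one of $\E(B)$, $\E(f(C))$ is $O(|T|^3/M_T)$ (the AM--GM and $\sqrt{K}$ computation you make explicit), and takes $U=B$ or $U=f(C)$ with the same fibre-counting lower bound on $|f(C)|$. One remark: both your proof and the paper's actually deliver $\E(U)\lesssim |T|^3M_T^{-1}$ rather than the $|T|M_T^{-1}$ written in the statement, which appears to be a typo, since the application in Theorem~\ref{thm:BCS1} uses the cubed version.
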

\begin{proof}
By Theorem~\ref{thm:LEDRNSWI} there exist $C\subseteq B\subseteq T$, with $|T|\ll |B| \ll (\log|A|)^{2} |C|$ such that either of $\E(B)$ or $\E(f(C))$ is bounded by $O(|T|^3/M_T)$. If the former is true, we may take $U= B$. Otherwise, we take $U = f(C)\subset T$, noting that $|f(C)|\gg |C|/(d+1)\gg |T|/((d+1)(\log|T|)^2)$.
\end{proof}

\begin{proof}[Proofs of Theorems~\ref{thm:BCS1} and \ref{thm:BCS2}]
To prove Theorem~\ref{thm:BCS1} we use Lemma~\ref{lem:SwaWinEBL} to identify a subset $U\subseteq T$, with $|U|\gg |T|/((d+1)(\log|T|)^2)$. Then applying the bound on the additive energy of $U$, given by Lemma~\ref{lem:SwaWinEBL}, to \eqref{eqn:BCSHLS}, we obtain the required result.

Theorem~\ref{thm:BCS2} is proved similarly. To obtain \eqref{eqn:BCSDSB}, we use Lemma~\ref{lem:SwaWinEB} in place of Lemma~\ref{lem:SwaWinEBL}. To prove \eqref{eqn:BCSDSB2}, firstly through the same process, we identify a subset $V\subseteq S$, with $|V|\gg |S|$. Then we simply apply the bounds on the additive energy of $U$ and $V$ to \eqref{eqn:BCSHSS}.
\end{proof}

\begin{proof}[Proof of Theorem~\ref{thm:LWP}]
To prove \eqref{eqn:WienerfI}, we apply Proposition~\ref{prop:bwdss2} to the set $A$ and polynomial $g(x,y) = f(x) + f(y)$. Since $f$ is quadratic, the polynomial $g$ is non-degenerate. Note that $\E(f(A))\ll \E_g(A)$.
This ensures existence of sets $B, C\subseteq A$, with $|C|\gtrsim |B|\gg |A|$ such that
\[\E(B, A)^{3} \E(f(C))^2 \lesssim |A|^{14}.\]
Further note that $\E(B, A)\geq |A|^2|B|^2|B+A|^{-1}\gg |A|^4|A+A|^{-1}\gg |A|^{3}$. 
 
We use Lemma~\ref{lem:KonShkWi} with $X=f(A)$ and $X_1 = f(C)$ to get the required result. To check the required $p$-constraint of Proposition~\ref{prop:bwdss2},  note that $|A|^6 (\E(B,A))^{-1}\ll |A|^3$ and so our application is justified if $|A|^3\ll p^2$.

To prove \eqref{eqn:WienerOrbits}, we note that since $f(T)\subseteq T$, we may apply Lemma~\ref{lem:SwaWinEB}, which ensures existence of $U\subseteq T$ with $|U|\approx |T|$ and such that $\E(U)\lesssim |T|^{3-1/5}$. We apply Lemma~\ref{lem:KonShkWi}, with $X=T$ and $X_1 = U$, obtaining the required result.
\end{proof}

\section*{Acknowledgements}
We are especially grateful to Yiting Wang for pointing out an error in an earlier preprint.  
We thank Ilya Shkredov, Igor Shparlinski and Arne Winterhof for their helpful comments and suggestions.
The second author was supported by the Austrian Science Fund FWF grants P 30405 and P 34180.


\begin{thebibliography}{}
\bibitem{BW} A. Balog and T.D. Wooley, {\it A low–energy decomposition theorem}, {\it Quart. J. Math.\/}, {\bf 68} (2017), 207--226.

\bibitem{BouGar14} J.~Bourgain and M.~Z.~Garaev,
{\it Sumsets of reciprocals in prime fields and multilinear Kloosterman sums},
{\it Izv. Ross. Akad. Nauk Ser. Mat.\/}, {\bf 78(4)} (2014), 19--72; translation in {\it Izv. Mat.\/}, {\bf 78} (2014), 656--707.

\bibitem{BouGlibKon} J.~Bourgain, A.~Glibichuk and S.~V.~Konyagin,
{\it Estimates for the number of sums and products and for exponential sums over subgroups in fields of prime order},
{\it J. Lond. Math. Soc.\/}, {\bf 73} (2006), 380--398.

\bibitem{BukhTsim} B. Bukh and J. Tsimerman, {\it Sum-product estimates for rational functions}, {\it Proc. Lond. Math. Soc.\/}, {\bf 104} (2012), 1--26.

\bibitem{Cha13} M-C. Chang, {\it Expansion of quadratic maps in prime fields\/}, {\it Proc. Amer. Math. Soc.\/}, {\bf 142} (2013), 85--92.

\bibitem{CGOS} J. Cilleruelo, M. Garaev, A. Ostafe and I. E. Shparlinski, {\it On the concentration of points of polynomial maps and applications}, {\it Math. Z.\/}, {\bf 272} (2012), 825--837.

\bibitem{Gar}  M. Garaev, {\it The sum-product estimate for large subsets of prime fields}, {\it Proc. Amer. Math. Soc.\/}, {\bf 136} (2008), 2735--2739.

\bibitem{Garc}  V. C. Garcia, {\it The finite Littlewood problem in $\F_p$}, {\it Ramanujan J.\/}, {\bf 47} (2018), 1--14.

\bibitem{GuiShp}  J. Gutierrez and I. E. Shparlinski, {\it Expansion of orbits of some dynamical systems over finite fields}, {\it Bull. Austral. Math. Soc.\/}, {\bf 82} (2010), 232--239.

\bibitem{Heg} N.Hegyv\'ari,
{\it Some remarks on multilinear exponential sums with an application},
{\it J. Number Theory}, {\bf 132} (2012), 94--102.

\bibitem{KoMiPhSh} D. Koh, M. Mirzaei, T. Pham and C-Y. Shen, {\it Exponential sum estimates over prime fields}, {\it Int. J. Number Theory\/}, {16} (2020), 291--308.

\bibitem{KoNassPhamVal} D. Koh, H. Nassajian Mojarrad, T. Pham and C. Valculescu, {\it Four-variable expanders over the prime fields}, {\it Proc. Amer. Math. Soc.\/}, {146} (2018), 5025--5034. 


\bibitem{KonShk15}  S. V. Konyagin and I. D. Shkredov, {\it A quantitative version of the Beurling-Helson theorem}, {\it Funct. Anal. Its Appl.\/}, {\bf 49} (2015), 110--121.

\bibitem{KonShp}
S. V. Konyagin and I. E. Shparlinski, {\it Character sums with exponential functions and their applications}, {\it Cambridge Univ. Press\/}, 1999.

\bibitem{Mac} S. Macourt, {\it Decomposition of subsets of finite fields}, {\it Funct. Approx. Comment. Math.\/}, {\bf 61} (2019), 243--255.

\bibitem{Mirzaei} M. Mirzaei, {\it A note on conditional expanders over prime fields}, {\it Discrete Mathematics\/}, to appear.

\bibitem{MohSte} A. Mohammadi and S. Stevens, {\it Attaining the exponent $5/4$ for the sum-product problem in finite fields}, preprint, {\tt 	arXiv:2103.08252 [math.CO]}.

\bibitem{Ost} A. Ostafe, {\it Polynomial values in affine subspaces of finite fields\/}, {\it JAMA}, {138} (2019), 49--81.

\bibitem{Pet12} G. Petridis, {\it New proofs of Pl\"unnecke-type estimates for product sets in groups}, {\it Combinatorica\/}, {\bf 32} (2012), 721--733.

\bibitem{PhaVindeZ} T. Pham, L.A. Vinh and F. de Zeeuw, {\it Three-variable expanding polynomials and higher-dimensional distinct distances}, {\it Combinatorica}, {\bf 39} (2017), 411 -- 426.

\bibitem{RNLi} O. Roche-Newton and L. Li, {\it An improved sum-product estimate for general finite fields}, {\it SIAM J. Discrete Math.\/}, {\bf 25} (2011), 1285--1296.

\bibitem{RNShp} O. Roche-Newton and I. E. Shparlinski, {\it Polynomial values in subfields and affine subspaces of finite}, {\it Quart. J. Math.\/}, {\bf 66} (2015), 693--706.

\bibitem{RNShWin} O. Roche-Newton, I. E. Shparlinski and A. Winterhof, {\it Analogues of the Balog-Wooley decomposition for subsets of finite fields and character sums with convolutions}, {\it Ann. Comb.\/}, {\bf 23} (2019), 183--205.

\bibitem{Rud18} M. Rudnev,
{\it On the number of incidences between points and planes in three dimensions}, {\it Combinatorica}, (2018) {\bf 38}, 219--238.

\bibitem{RudShaShk} M. Rudnev, G. Shakan and I. Shkredov, {\it Stronger sum-product inequalities for small sets}, {\it Proc. Amer. Math. Soc.\/}, {\bf 148} (2020), 1467--1479.

\bibitem{RudShSt} M. Rudnev, I. Shkredov and S. Stevens, {\it On the energy variant of the sum-product conjecture}, {\it Rev. Mat. Iberoam\/}, {\bf 36(1)} (2020), 207--232.

\bibitem{Shk} I. D. Shkredov, {\it A remark on sets with small Wiener norm}, in: Raigorodskii A., Rassias M. (eds) {\it Trigonometric sums and their applications}, Springer, Cham, 2020.

\bibitem{ShkShp} I. D. Shkredov and I. E. Shparlinski, {\it Double character sums with intervals and arbitrary sets}, {\it Proc. Steklov Inst. Math.\/}, {\bf 303} (2018), 239--258.

\bibitem{Sol08} J. Solymosi,
{\it Incidences and spectra of graphs},
{\it Bolyai Soc. Math. Stud.\/}, {\bf 19} (2008), 499--513.

\bibitem{StWa} S. Stevens and A. Warren, {\it On sumsets of convex functions}, preprint, {\tt arXiv:2102.05446 [math.CO]}.

\bibitem{SwaWin} C. Swaenepoel and A. Winterhof, {\it Additive double character sums over structured sets and applications}, {\it Acta Arith.\/}, to appear.

\bibitem{Vinh} L. Vinh, {\it Szemer\'edi-Trotter type theorems and sum-product estimates in finite fields}, {\it Eur. J. Combin.\/}, {\bf 32} (2011), 1177--1181.

\bibitem{Vino} I. M. Vinogradov, {\it An introduction to the theory of numbers}, {\it Pergamon  Press\/}, London and New York, 1955.

\bibitem{Vu} V. Vu, {\it Sum-product estimates via directed expanders}, {\it Math. Res. Lett.\/}, {\bf 15} (2008), 375--388.

\bibitem{Xue} B. Xue, {\it Asymmetric estimates and the sum-product problems}, {\it Acta Arith.\/}, to appear.
\end{thebibliography}
\end{document}